\documentclass[a4paper, 12pt]{amsart}
\usepackage[a4paper,hmargin=2.5cm,vmargin=2.5cm]{geometry}
\usepackage[T2A]{fontenc}
\usepackage[utf8]{inputenc}

\usepackage[russian,english]{babel}
\usepackage{amsmath,amssymb,amsthm,amsfonts,mathrsfs,amscd}
\usepackage[linkcolor=blue,citecolor=blue,colorlinks=true,urlcolor=blue]{hyperref}
\usepackage{comment}
\usepackage{tikz}
\usepackage{graphics}
\usepackage{xfrac}
\usepackage{faktor}
\usepackage{rotating}
\numberwithin{equation}{section}
\newtheorem{theorem}{Theorem}[section]
\newtheorem{lemma}[theorem]{Lemma}
\newtheorem{propos}[theorem]{Proposition}
\newtheorem{corollary}[theorem]{Corollary}
\theoremstyle{definition}
\newtheorem{definition}[theorem]{Definition}

\newtheorem{proof}{Proof}
\let\origendproof\endproof
\def\endproof{\unskip\nobreak\hskip5pt plus 1fill$\square$\origendproof}

\newtheorem{rem}[theorem]{Remark}

\def\KK{\mathbb K}
\def\LL{\mathbb L}

\def\ZZ{\mathbb Z}
\def\PP{\mathbb P}

\def\gon{\mathrm{gon}}

\def\PGL{\mathop{\rm PGL}\nolimits}
\def\GL{\mathop{\rm GL}\nolimits}
\def\PSL{\mathop{\rm PSL}\nolimits}

\def\Aut{\mathop{\rm Aut}\nolimits}
\def\Bir{\mathop{\rm Bir}\nolimits}
\def\BBir{\mathop{\rm BBir}\nolimits}
\def\Ind{\mathop{\rm Ind}\nolimits}

\def\gcd{\mathop{\rm gcd}\nolimits}

\def\dP{\mathop{\rm dP}\nolimits}
\newcommand{\FF}{\mathbb{F}}
\newcommand{\CB}{\mathrm{CB}}

\begin{document}
\title{Jordan property of birational automorphism groups of surfaces and birational permutations}
\author{Alexandr Zaitsev}
\address{National research university ''Higher school of economics'', Laboratory of algebraic geometry, 6 Usacheva str., Moscow, 119048, Russia}
\email{\href{alvlzaitsev1@gmail.com}{alvlzaitsev1@gmail.com}}
\thanks{The study has been funded within the framework of the HSE University Basic Research Program.} 

\maketitle
\begin{abstract}
   We prove that the group of birational automorphisms of a geometrically irreducible algebraic surface over a finite field is Jordan. We show that the analogous statement fails in higher dimensions. Finally, we prove that groups of birational permutations over finite fields have bounded finite $p'$-subgroups; in particular, they are~$p$-Jordan.
\end{abstract}

\tableofcontents

\section{Introduction}

Groups of birational automorphisms of algebraic varieties are often complicated. One of the possible ways to study such groups is to look at their finite subgroups. In particular, one can study the Jordan property of infinite groups.

Recall that a group $\Gamma$ is called \emph{Jordan} if there exists a constant $J=J(\Gamma)$ such that every finite subgroup $G \subset \Gamma$ contains a normal abelian subgroup of index at most $J$. The minimal such $J$ is called a \emph{Jordan constant} of $\Gamma$.

Over fields of characteristic zero, the Jordan property for groups of birational automorphisms is by now well understood in many cases. In particular, the group of birational automorphisms of projective plane $\Bir(\PP^2)$ is Jordan (see~\cite[Theor\'em\`e 3.1]{Serre}). More
generally, in~\cite{PrShr higher dim} Prokhorov and Shramov proved that~$\Bir(X)$ is Jordan for every non-uniruled variety~$X$, and also showed that birational automorphism groups of rationally connected varieties of dimension $n$ are uniformly Jordan.
In particular, this gives the Jordan property for~$\Bir(\PP^n)$ over fields of
characteristic zero. The situation in positive characteristic is different. Already over an algebraically closed field~$\KK$ of characteristic~$p>0$ the group~$\PGL_2(\KK)$ contains simple subgroups isomorphic to~$\PSL_2(\FF_{p^r})$ of arbitrarily large order, so the usual Jordan property fails already for the group~$\relpenalty=10000 \Bir(\PP^1) \simeq \Aut(\PP^1) \simeq \PGL_2(\KK)$. On the other hand, this argument does not apply over finite fields.

In~\cite{main}, Prokhorov and Shramov proved that the group $\Bir(\PP^2_{\FF_{q}})$ over a finite field $\FF_q$ is Jordan. A natural question here is whether over finite fields one can strengthen this result and prove the Jordan property for birational automorphism groups of all geometrically irreducible surfaces. The purpose of the first part of this paper is to answer this question. Namely, the main result of the paper is the following theorem.
\begin{theorem}\label{theo: main}
Let $S$ be a geometrically irreducible algebraic surface over $\FF_q$. Then the group $\Bir(S)$ is Jordan.
\end{theorem}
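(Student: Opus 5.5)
Plan: reduce to a smooth projective minimal model and split cases according to Kodaira dimension and the structure of the minimal model. Since $\Bir(S)$ depends only on the birational class of $S$, we may replace $S$ by a smooth projective model over $\FF_q$. Resolution of singularities for surfaces holds in any characteristic, and so does the minimal model program for surfaces over perfect fields. A standard and important observation: every finite subgroup $G\subset\Bir(S)$ can be regularized. That is, there is a smooth projective surface $S'$ birational to $S$ with $G\subset\Aut(S')$. After running a $G$-equivariant MMP, we may assume $S'$ is $G$-minimal, and we work with finite groups acting regularly on such models.

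\emph{Case 1: $S$ is not geometrically birationally ruled ($\kappa\ge 0$).} Then $S$ has a unique minimal model $X$, and $\Bir(S)=\Aut(X)$. The group $\Aut(X)$ acts on $\mathrm{NS}(X_{\bar\FF_q})$ and on $\ell$-adic cohomology $H^2_{et}(X_{\bar\FF_q},\mathbb Q_\ell)$. Minkowski-type bounds for finite subgroups of $\GL_n(\ZZ)$ (or $\GL_n(\mathbb Q_\ell)$ with integral structure) then bound the image. The kernel acts trivially on an ample class, so it lies in a subgroup of $\Aut$ preserving a polarization. Such a subgroup is an algebraic group $\mathrm{Aut}^0$-extension whose $\FF_q$-points form a finite group of bounded size, or for abelian and elliptic surfaces an abelian-by-bounded group. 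In fact, since $\Aut^0(X)$ is a commutative group scheme (an abelian variety or a torus, etc.) in this range, its $\FF_q$-points are abelian. This case gives Jordan directly.

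\emph{Case 2: $S$ is geometrically ruled over a curve $C$ of genus $\ge1$.} The MRC/ruling map $S\dashrightarrow C$ is canonical, which gives an exact sequence $1\to \Bir(S)_C\to\Bir(S)\to \Aut(C)$. Here $\Bir(S)_C$ embeds into $\PGL_2(\FF_q(C))$, possibly with a twist by a conic over $\FF_q(C)$. $\Aut(C)$ is finite for $g\ge2$; for $g=1$ it is an extension of a finite group by $C(\FF_q)$, which is finite anyway. So $\Aut(C)$ is finite. Finite subgroups of $\PGL_2(K)$ for $K=\FF_q(C)$ are controlled by Dickson's classification: they are cyclic, dihedral, $A_4,S_4,A_5$, $p$-semidirect subgroups $E\rtimes$ cyclic, or $\PSL_2/\PGL_2(\FF_{p^r})$. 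Because the constant field of $K$ is $\FF_q$ (geometric irreducibility), the latter can only occur with $\FF_{p^r}$ contained in a bounded extension. The elementary abelian $p$-subgroups are abelian, and the cyclic part is bounded since roots of unity in $K$ are bounded. So finite subgroups of $\Bir(S)_C$ are bounded-by-abelian with abelian normal subgroups, and the finite quotient gives Jordan.

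\emph{Case 3: $S$ is geometrically rational.} Here I would follow the Prokhorov--Shramov strategy from~\cite{main}. A $G$-minimal model is either a del Pezzo surface or a conic bundle over a form of $\PP^1$. For del Pezzo surfaces, $\Aut$ is bounded except for the degree $\ge 5$/toric cases, where $\Aut(X)$ sits in an algebraic group whose $\FF_q$-points form a finite group of bounded order. That order depends on $q$ and the form, but there are only finitely many forms. For conic bundles $X\to B$, there is the sequence $1\to G_F\to G\to G_B\to1$. $G_B\subset\PGL_2(\FF_q)$ is bounded, and $G_F\subset\PGL_2(\FF_q(B))$ is handled via Dickson as in Case 2. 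The subtle part is that the model, and hence the constant field data, varies with $G$. However, every intermediate object is a surface over $\FF_q$, so the function fields all have constant field $\FF_q$.

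The main obstacle is Case 3, specifically bounding the contribution from the $p$-parts, since $G_F$ can contain large elementary abelian $p$-groups. Such a $p$-group $E$ extended by a cyclic $p'$-group gives $E\rtimes\mu$ where $\mu$ is bounded. One must also bound how $G_B$ interacts so that a normal abelian subgroup of bounded index exists. Here I would use that the normal subgroup $E$ is abelian and that its normalizer quotient in $G$ is bounded in order. For Case 3 it is presumably enough to invoke~\cite{main} directly, since geometrically rational surfaces over $\FF_q$ are $\FF_q$-forms sharing the needed bounds, and reduce Theorem~\ref{theo: main} to Cases 1 and 2.
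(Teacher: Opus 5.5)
Your outline works, but it is organised differently from the paper, most visibly for $\kappa\geqslant 0$.

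\textbf{The case $\kappa\geqslant 0$.} The paper argues in one line: $\Bir(S)\subset\Bir(S_{\overline{\FF}_q})$, and the latter is Jordan by the Chen--Shramov theorem. You instead use that the ground field is finite.
\begin{enumerate}
\item $\Bir(S)=\Aut(X)$ for the unique minimal model $X$.
\item The image of a finite $G$ in $\GL(\mathrm{Num}(X_{\overline{\FF}_q}))$ is bounded by Minkowski.
\item The kernel preserves the numerical class of a fixed ample divisor. So it lies in the $\FF_q$-points of a finite-type group scheme, i.e.\ in a fixed finite group.
\end{enumerate}
This proves more than needed: finite subgroups of $\Bir(S)$ have bounded order. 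That fails over $\overline{\FF}_q$, e.g.\ for torsion translations of an abelian surface. It also avoids the deep input. Your remarks about $\Aut^0$ being an abelian variety or a torus are superfluous, and a torus cannot occur when $\kappa\geqslant 0$.

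\textbf{The case $\kappa=-\infty$.} The paper does not separate geometrically rational surfaces from irrational ruled ones. It regularizes $G$ and runs the $G$-MMP. Del Pezzo surfaces are handled by Proposition~\ref{prop: del Pezzo}. Conic bundles over a base of any genus are handled by the single Theorem~\ref{theo: main1}, with $|\Aut(C)|$ and $\gon(C)$ controlled via Hasse--Weil and Stichtenoth. This gives explicit constants; your route (finiteness of del Pezzo surfaces over $\FF_q$, citation of \cite{main}) is only qualitative.

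Two points you left open are settled exactly as in the paper.
\begin{enumerate}
\item Large $\PSL_2(\FF_{p^r})$ is excluded by Lemmas~\ref{autOrder} and~\ref{PSLSize}. A $p'$-element of order $m$ has a fixed point over a quadratic extension and acts on the tangent line there by a primitive $m$-th root of unity, so $m$ divides $q^{2k}-1$. But $\PSL_2(\FF_{p^r})$ contains elements of order $(p^r+1)/2$ (resp.\ $2^r+1$ when $p=2$). Your constant-field remark gives the same conclusion with $k=1$, but you did not actually carry it out.
\item The ``main obstacle'' in your Case 3 is not one. In Dickson's type (5), the Sylow $p$-subgroup of $G_F$ is unique, hence characteristic in $G_F$ and therefore normal in $G$. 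Its index in $G$ is at most $q(q^2-1)^2$; this is the content of Lemma~\ref{char} and Lemma~\ref{lemma: g=0}.
\end{enumerate}

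Finally, saying that a geometrically rational $S$ is an $\FF_q$-form of a rational surface does not reduce it to $\Bir(\PP^2_{\FF_q})$. It only embeds $\Bir(S)$ into $\Bir(\PP^2_{\overline{\FF}_q})$, which is not Jordan. What actually covers non-$\FF_q$-rational $S$ is the $G$-MMP argument itself, which you also sketch.
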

The key point is to treat surfaces of Kodaira dimension $-\infty$ that are not geometrically rational. After running an equivariant minimal model program, one is naturally led to conic bundles over curves. Thus, the main technical part of the paper is devoted to birational automorphism groups of two-dimensional conic bundles over a finite field $\FF_q$. More precisely, we rewrite several arguments from \cite[Sections 4 and 6]{main} in a slightly more general form, so that they apply to conic bundles.

Combining this with elementary bounds on gonality and with estimates for automorphism groups of curves over finite fields, we obtain explicit bounds in the cases when the base of a conic bundle has genus $0$, $1$, or at least $2$. In particular, we prove the following proposition.

\begin{propos}\label{prop: g>=1}
Let $\FF_q$ be a finite field. Let $\varphi \colon S \to C$ be a two-dimensional geometrically irreducible conic bundle over $\FF_q$, where $C$ is a smooth curve of genus $g \geqslant 1$. Then the group of birational automorphisms~$\Bir(S)$ contains a normal abelian subgroup of index at most
\begin{enumerate}
    \item $J_{\CB}^1 = 24(1 + q + 2\sqrt{q})\max\{q^2(q^4-1), 60\}$ if $g = 1$;
    \item $J_{\CB}^{\geqslant 2} = 75g^4\max\{q^{2g-2}(q^{4g-4}-1),60\}$ if $g \geqslant 2$.
\end{enumerate}
\end{propos}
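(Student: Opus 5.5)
Since $g(C)\geqslant 1$, the curve $C$ is not rational, so by the rigidity of the rational fibration every birational automorphism of $S$ maps fibres of $\varphi$ to fibres. This gives an exact sequence
\[
1\longrightarrow \Bir(S)_\varphi \longrightarrow \Bir(S)\longrightarrow \Gamma\subset \Aut(C),
\]
where $\Bir(S)_\varphi$ is the subgroup acting fibrewise. Geometrically, the curve over $\bar{\FF}_q$ is the only rational pencil of a ruled surface over a base of positive genus, and the map to $\Aut(C)$ is defined over $\FF_q$. The kernel embeds into $\Aut(S_\eta)$, where $S_\eta$ is the generic fibre: a smooth conic over the function field $K=\FF_q(C)$. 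So $\Bir(S)_\varphi\subset \PGL_2(K)$, or a form of it.

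The plan is to bound the two pieces separately and then combine them via a Jordan-type lemma for extensions. The key steps are as follows.

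\textbf{Step 1 (finite subgroups of $\PGL_2(K)$).} I would prove a bound on finite subgroups of $\PGL_2(K)$, where $K$ is the function field of a curve of genus $g$ over $\FF_q$. This should be the adaptation of the argument of \cite[Sections 4 and 6]{main}. A finite subgroup $G$ either has order prime to $p$, in which case it is cyclic, dihedral, $A_4$, $S_4$ or $A_5$, up to index $60$, giving an abelian subgroup of index at most $60$. Or $G$ contains $p$-elements, and then $G$ lies in a subgroup of the form $\PGL_2(\FF_{q^r})$ or $\PSL_2(\FF_{q^r})$ (up to a Borel part), where $\FF_{q^r}$ is the algebraic closure of $\FF_q$ in the splitting field. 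The Borel part has an abelian normal subgroup of index at most $q^r-1$-ish, while the $\PGL_2$ part forces index at most $|\PGL_2(\FF_{q^r})|\sim q^r(q^{2r}-1)$.

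The point is to bound $r$ using gonality. A $p$-element or a quadratic extension splitting the conic yields a degree-$r$ constant-field extension, or a cover of controlled degree. Using that $C$ has a point over $\FF_{q^{2g-2}}$ (or that $\gon(C)\leqslant 2g-2$, resp. $\leqslant 2$ for $g=1$), the relevant field of constants has degree at most $2g-2$ (resp. $2$). This yields the factors $\max\{q^{2g-2}(q^{4g-4}-1),60\}$ and $\max\{q^2(q^4-1),60\}$. I expect this step to be the main obstacle: carefully identifying which finite field governs the non-abelian $p$-subgroups after base change to the splitting field of the conic. I would first try to reduce to the split case $\PGL_2(K')$ with $[K':K]\leqslant 2$ and then apply the Prokhorov--Shramov analysis verbatim.

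\textbf{Step 2 (the image in $\Aut(C)$).} The image $\Gamma$ is a subgroup of $\Aut(C)$.
\begin{itemize}
\item For $g\geqslant 2$, $\Aut(C)$ over $\FF_q$ is finite, and I would invoke a bound of order $O(g^4)$ for automorphism groups of curves in positive characteristic (Stichtenoth-type, e.g. $|\Aut|\leqslant 16g^4$ outside Hermitian exceptions, with the Hermitian curve giving roughly $g^4$ scale). Passing to a normal subgroup inside the kernel costs at most $|\Gamma|$, accounting for the $75g^4$ factor.
\item For $g=1$, $\Aut(C)$ is an extension of $\Aut(C,O)$, of order dividing $24$, by translations $C(\FF_q)$ of order at most $1+q+2\sqrt q$ by Hasse--Weil. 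So every finite subgroup of $\Gamma$ has an abelian subgroup of index dividing $24$, and $|C(\FF_q)|$ bounds the abelian part.
\end{itemize}

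\textbf{Step 3 (combination).} For a finite $G\subset\Bir(S)$, let $A\subset G\cap \Bir(S)_\varphi$ be a characteristic abelian subgroup of index at most $J_1$ from Step 1. Then $A$ is normal in $G$, and $[G:A]\leqslant J_1\cdot|\mathrm{im}(G)|$. For $g\geqslant 2$ this gives the stated constant directly. For $g=1$ the image is not bounded by order alone, so I would instead bound $[G:A]$ by $24\,|C(\FF_q)|\,J_1$. This uses that the image of $G$ lies in $C(\FF_q)\rtimes \Aut(C,O)$, whose order is at most $24(1+q+2\sqrt q)$. Making $A$ characteristic may cost a mild extra factor; I would handle this by taking the intersection of conjugates, or by choosing the canonical abelian subgroup (e.g. the unipotent or torus part) in Step 1.
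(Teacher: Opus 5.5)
Your skeleton matches the paper's. Since $g\geqslant 1$, every birational self-map preserves $\varphi$, and the fibrewise part embeds into $\Aut(Q)$ for the generic conic $Q$ over $K=\FF_q(C)$. One takes a characteristic abelian subgroup there and multiplies its index by $|\Aut(C)|$. That factor is bounded via Stichtenoth for $g\geqslant2$, together with the explicit check that Hermitian curves give at most $\frac{224}{3}g^4$ (attained at $p^n=3$), which is where $75$ comes from. For $g=1$ it is bounded by $24\,|C(\FF_q)|$ and Hasse--Weil.

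The gap is Step~1. It is the real content, and you leave it open with a mechanism that does not work as stated. Gonality bounds the constant field of a quadratic splitting field $K'$ only by $\FF_{q^{2k}}$, $k=\gon(C)$, since $[K':\FF_q(t)]=2k$; it does not give $\FF_{q^k}$. Your fallback of passing to $\PGL_2(K')$ and running Prokhorov--Shramov verbatim yields $\max\{q^{2k}(q^{4k}-1),60\}$, which overshoots both stated constants. The existence of a point over $\FF_{q^{2g-2}}$ is irrelevant. Also, intersecting conjugates to get normality a priori only gives index $J_1^{[G:H]}$, not $J_1\cdot[G:H]$. You need genuinely characteristic subgroups:
the cyclic index-$2$ subgroup of a dihedral group, the unique Sylow $p$-subgroup in the case $G_p\rtimes\ZZ/m\ZZ$, and the trivial subgroup when $|G|$ is already bounded.

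The missing idea is to control everything through semisimple $p'$-elements. A $p'$-element $h\in\Aut(Q)$ of order $m$ has two geometric fixed points forming a degree-$2$ subscheme defined over the base field. So $h$ fixes a point over a quadratic extension $\LL'$ and acts faithfully on the tangent line there by an $m$-th root of unity in $\LL'$. The paper uses $[\LL':\FF_q(t)]=2k$ to get $m\mid q^{2k}-1$. This excludes $\PSL_2(\FF_{p^r})$ with $p^r>q^k$ via its element of order $\frac{p^r+1}{2}$ (resp.\ $2^r+1$). It also bounds the index $m$ of the Sylow $p$-subgroup in the Borel-type case. Together this gives exactly $\max\{q^k(q^{2k}-1),60\}$ with $k=2$, resp.\ $k\leqslant 2g-2$.

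Your constant-field intuition, made precise, would actually do better. Since $C$ is geometrically irreducible, $\FF_q$ is algebraically closed in $\FF_q(C)$, so the roots of unity of any quadratic extension lie in $\FF_{q^2}$. Hence $m\mid q^2-1$, and the fibrewise part gets the bound $\max\{q(q^2-1),60\}$ independently of $g$. But that argument, not gonality, is what you would have to supply.
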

It is worth noting that these explicit estimates are also of interest in connection with research on the Jordan constants of groups of birational automorphisms (see~\cite{Yas} and~\cite{Zai} in characteristic $0$, also see~\cite{main} and~\cite{Vikulova} in case of finite fields). Finally, together with the results of \cite{surfaces} on surfaces of non-negative Kodaira dimension, this yields Theorem \ref{theo: main}. We conclude this part by showing that the analogous statement does not extend to higher dimensions and the group of birational automorphisms of~$\PP^n$, $n > 2$, is not Jordan over any finite field.

\begin{theorem}\label{theo: dim3}
For every finite field $\FF_q$ and for any integer $n > 2$, the group $\Bir(\PP^n_{\FF_q})$ is not Jordan.
\end{theorem}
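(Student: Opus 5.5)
We plan to exhibit, inside $\Bir(\PP^3_{\FF_q})$, finite $p$-subgroups of Heisenberg type in which every abelian subgroup has unbounded index. Since any normal abelian subgroup of index at most $J$ is in particular an abelian subgroup of index at most $J$, this rules out a Jordan constant. The case $n>3$ then follows at once: via $\PP^n \dashrightarrow \PP^3\times \mathbb A^{n-3}$, let $\Bir(\PP^3)$ act trivially on the extra affine coordinates. This gives an embedding $\Bir(\PP^3_{\FF_q})\hookrightarrow \Bir(\PP^n_{\FF_q})$, and the Jordan property passes to subgroups.

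Work in affine coordinates $x,y,z$ on $\mathbb A^3\subset\PP^3$. Fix $d\geqslant 1$ and let $V_d\subset \FF_q[x]$ be the $\FF_q$-space of polynomials of degree $<d$. For $a,b\in V_d$ and $c\in \FF_q[x]$ with $\deg c<2d$, consider the automorphism of $\mathbb A^3$
\[
g_{a,b,c}\colon (x,y,z)\mapsto \bigl(x,\; y+a(x),\; z+b(x)\,y+c(x)\bigr).
\]
It is triangular, hence invertible, so it lies in $\Bir(\PP^3)$. A direct composition check shows that these maps form a finite group $H_d$ of order $q^{4d}$. The map $g_{a,b,c}\mapsto (a,b)\in V_d\oplus V_d$ is a homomorphism whose kernel consists of the central $z$-translations $z\mapsto z+c(x)$. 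The commutator of $g_{a_1,b_1,c_1}$ and $g_{a_2,b_2,c_2}$ is the translation $z\mapsto z+\omega$, where
\[
\omega=\omega\bigl((a_1,b_1),(a_2,b_2)\bigr)=b_1a_2-b_2a_1\in \FF_q[x]
\]
up to sign. This is the only computation to do carefully, and it is routine.

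Now let $A\subset H_d$ be abelian, and let $\bar A$ be its image in $V_d\oplus V_d$. Then $\bar A$ is an additive subgroup, hence an $\FF_q$-subspace since $\FF_q$ is prime-field-generated over $\FF_p$; alternatively, one can just work with $\FF_p$-subspaces throughout. The subspace $\bar A$ is isotropic for $\omega$. Compose $\omega$ with the functional $\lambda$ taking the coefficient of $x^{d-1}$. The pairing $V_d\times V_d\to\FF_q$, $(a,b)\mapsto\lambda(ab)$, has an anti-diagonal Gram matrix in the monomial basis, so it is nondegenerate. Hence $\lambda\circ\omega$ is a nondegenerate alternating form on the $2d$-dimensional space $V_d\oplus V_d$, and $\bar A$ is isotropic for it as well. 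Therefore $\dim\bar A\leqslant d$, and since $A$ surjects onto $\bar A$,
\[
[H_d:A]\;\geqslant\;[V_d\oplus V_d:\bar A]\;\geqslant\; q^{d}.
\]
As $d$ is arbitrary, no constant $J$ works, so $\Bir(\PP^3_{\FF_q})$ is not Jordan, and hence neither is $\Bir(\PP^n_{\FF_q})$ for any $n>2$.

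The main point requiring care is the commutator formula together with the nondegeneracy of the truncated pairing, i.e., the step that converts ``abelian'' into ``isotropic of dimension at most $d$''. Everything else is bookkeeping. If the $\FF_q$-linearity of $\bar A$ causes trouble, we will run the same argument over $\FF_p$: use $\mathrm{Tr}_{\FF_q/\FF_p}\circ\lambda\circ\omega$, which is still nondegenerate, and obtain the bound $[H_d:A]\geqslant p^{d\log_p q}=q^d$.
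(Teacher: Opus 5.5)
Your proof is correct, and the group you use is in fact the paper's group. The unitriangular matrices in $\PGL_3(\FF_q(t))$ preserve the affine chart $Z\neq 0$ of the generic fibre of $\PP^2\times\PP^1\to\PP^1$. On that chart they act by $(t,y,z)\mapsto(t,\,y+c(t),\,z+a(t)y+b(t))$, which after renaming are exactly your maps $g_{a,b,c}$, with the same kind of degree truncation to ensure closure.

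Where you differ is the key counting step. The paper bounds the centralizer $C_{G_n}(g)$ of every non-central $g$ by solving $a_gz=c_gx$ in polynomials, after dividing by $\gcd(a_g,c_g)$. Since an abelian subgroup is either central or contained in such a centralizer, its index is at least $q^{m+1}$. You instead pass to the quotient $H_d\to V_d\oplus V_d$ by the central $z$-translations, note that images of abelian subgroups are isotropic for the commutator form, and linearize with the truncated pairing $\lambda$. This reduces everything to the fact that isotropic subspaces of a nondegenerate alternating form on a $2d$-dimensional space have dimension at most $d$.

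Your route avoids the divisibility bookkeeping and is sharp: the abelian subgroup $\{g_{a,0,c}\}$ has index exactly $q^d$. The paper's route is completely elementary and gives the slightly stronger pointwise fact that every non-central element has a small centralizer. Your affine presentation also shows directly that the triangular subgroup of $\Aut(\mathbb{A}^3_{\FF_q})$ is already not Jordan. The paper's framing through $\PGL_3(\FF_q(t))$ instead emphasizes that the failure comes from the generic fibre over the infinite field $\FF_q(t)$.

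One small slip: an additive subgroup of an $\FF_q$-vector space is only an $\FF_p$-subspace, so $\bar A$ need not be an $\FF_q$-subspace when $q\neq p$. This is harmless. Since $\omega$ is $\FF_q$-bilinear, the $\FF_q$-span of $\bar A$ is still isotropic, hence has dimension at most $d$, which gives $|\bar A|\leqslant q^d$ directly. Your trace-form fallback also works.
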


In the second part, we establish Jordan-type properties for groups of birational permutations $\BBir(X)$ of the variety $X$ over a finite field $\FF_q$. Recall that the group~$\BBir(X)$ is a subgroup of $\Bir(X)$, which consists of birational automorphisms $f$ such that~$f$ and~$f^{-1}$ have no $\FF_q$-points of indeterminacy. Over finite fields this group was studied by Cantat in~\cite{Can09} and by Asgarli, Lai, Nakahara, Zimmermann in~\cite{ALNZ22}. In positive characteristic there are natural analogies for Jordan property and for boundedness of finite subgroups (for definitions see Section~\ref{sect: birational permutations}). For example, Brauer and Feit, and later Larsen and Pink, proved that over any
field $\KK$ of positive characteristic $p$ the groups~$\GL_n(\KK)$ are~$p$-Jordan; see~\cite[Theorem~0.4]{LP11}. Also,~\hbox{$p$-Jordan} property was established for algebraic groups by Hu in~\cite{Hu20} and for $\Bir(\PP^2)$ by Chen and Shramov in~\cite{surfaces}.
%In the first section we prove the conic bundle estimate. In the second section we derive corollaries for base curves of various genera. In the final section we combine these results with the existing statements on birational automorphism groups of surfaces in positive characteristic and prove Theorem \ref{main}.
We prove the following theorem.
\begin{theorem}\label{theo: BBir}
    Let $\FF_q$ be a finite field of characteristic $p$. Let $X$ be an irreducible variety over $\FF_q$ with a $\FF_q$-point. Then $\BBir(X)$ has bounded $p'$-subgroups. In particular, $\BBir(X)$ is $p$-Jordan.  
\end{theorem}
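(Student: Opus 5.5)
Fix an $\FF_q$-point $P\in X$. The idea is to linearize finite subgroups of $\BBir(X)$ at a smooth rational point and then use that finite $p'$-subgroups of $\GL_n(\FF_q)$ are bounded; the $p$-Jordan property will follow from the boundedness of $p'$-subgroups by the definitions in Section~\ref{sect: birational permutations}.

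\textbf{Step 1: reduction to a finite set of rational points.} Every $f\in\BBir(X)$, together with $f^{-1}$, is defined at all $\FF_q$-points. Hence $f$ induces a bijection of $X(\FF_q)$, which is a finite set. This gives a homomorphism $\BBir(X)\to \mathrm{Sym}(X(\FF_q))$. For a finite subgroup $G\subset\BBir(X)$, let $G_P$ be the stabilizer of $P$. Its index in $G$ is at most $N=|X(\FF_q)|$.

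\textbf{Step 2: choosing a point where linearization is possible.} I would like to work at a smooth point, but $X$ may be singular at all its rational points. To handle this, pass to a $G$-equivariant regularization: take a projective model $Y$ on which $G$ acts biregularly, which exists for any finite group of birational maps. Then replace $Y$ by an equivariant resolution, or at least work on the normalization of $Y$.

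The key problem is that such a model may lose the rational point. Instead I would argue locally. An element $g\in G_P$ is a local isomorphism at $P$: it is defined at $P$, $g^{-1}$ is defined at $g(P)=P$, and $g^{-1}\circ g=\mathrm{id}$. Hence $g$ induces an automorphism of the local ring $\mathcal O_{X,P}$ and of its completion. Because $P$ is rational, the residue field is $\FF_q$, so $G_P$ acts on the finite-dimensional $\FF_q$-spaces $\mathfrak m/\mathfrak m^k$. Note that it is the whole group $G_P$ that acts; this is where birational maps being regular at $P$ is used.

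\textbf{Step 3: faithfulness on $p'$-subgroups.} Let $H\subset G_P$ be a $p'$-group. I claim $H$ acts faithfully on $\mathfrak m/\mathfrak m^2$. Since $|H|$ is invertible in $\FF_q$, averaging gives an $H$-equivariant splitting $\mathfrak m=V\oplus \mathfrak m^2$ of the filtration, with $V\cong \mathfrak m/\mathfrak m^2$. Then $\mathcal O$ is generated, in the completion, by $V$. So an element acting trivially on $V$ acts trivially on the completed local ring $\widehat{\mathcal O}_{X,P}$, which injects into the completion. Such an element is therefore the identity near $P$, and hence is the identity birational map, since $X$ is irreducible.

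Consequently $H$ embeds into $\GL_n(\FF_q)$, where $n=\dim \mathfrak m/\mathfrak m^2$. Then
$$|H|\leqslant |\GL_n(\FF_q)|.$$

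\textbf{Step 4: passing back from $G_P$ to $G$.} A $p'$-subgroup $K\subset G$ satisfies $[K:K\cap G_P]\le N$. Combining this with Step 3 gives the uniform bound
$$|K|\leqslant N\cdot|\GL_n(\FF_q)|.$$
The $p$-Jordan property then follows by the standard implication.

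\textbf{Main obstacle.} The delicate point is Step 3, the faithfulness on $\mathfrak m/\mathfrak m^2$ at a possibly singular point. Linear reductivity of a $p'$-group gives the equivariant lifting of the generators. One must still check that a continuous automorphism of $\widehat{\mathcal O}$ that fixes the lifted generators is trivial; this holds because such an automorphism is determined by the images of topological generators. If that step fails, I would fall back to a cruder argument: an automorphism that acts trivially on all the $\mathfrak m/\mathfrak m^k$ is trivial, and the kernel of the action on $\mathfrak m/\mathfrak m^2$ acts unipotently on each $\mathfrak m^k/\mathfrak m^{k+1}$, so it is a $p$-group. For a $p'$-group $H$ this again gives an injection of $H$ into $\GL(\mathfrak m/\mathfrak m^2)$.
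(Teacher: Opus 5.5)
Your proposal is correct and follows essentially the same route as the paper: you bound the index of the stabilizer $G_P$ of a rational point by $N=|X(\FF_q)|$, then embed a $p'$-subgroup of the stabilizer into $\GL(T_P(X))\simeq\GL_n(\FF_q)$, which gives $|G|\leqslant N\cdot|\GL_n(\FF_q)|$. The only difference is that the paper cites the Chen--Shramov fixed-point linearization theorem, whereas your Step~3 proves it directly on $\widehat{\mathcal O}_{X,P}$ and makes explicit why it applies to birational maps that are only local isomorphisms at $P$; the regularization detour in Step~2 is unnecessary.
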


The paper is organized as follows. In Section~\ref{sect: conic bundles} we prove a general statement on
birational automorphism groups of two-dimensional conic bundles over finite fields. In Section~\ref{sect: Jordan property for surfaces} we apply this result together with bounds on gonality and automorphism groups of curves to prove Theorem~\ref{theo: main}. In Section~\ref{sect: higher dimensions} we show that the analogous statement does not extend to higher dimensions by proving Theorem~\ref{theo: dim3}. Finally, in Section~\ref{sect: birational permutations} we turn to groups of birational permutations over finite fields and prove Theorem~\ref{theo: BBir}.

All varieties in the text are assumed to be projective. We will use the following notation. For a finite field we write $\FF_q$, where
$q=p^l$ and~$p$ is the characteristic. By~$\overline{\KK}$ we denote the algebraic closure of the field~$\KK$. If $\KK \subset \LL$ is an extension of fields, and $X$ is a variety over~$\KK$, then by $X_{\LL}$ we denote the extension of scalars of $X$ to $\LL$.
For an irreducible algebraic variety $X$ over a field~$\KK$, we denote by~$\KK(X)$ its field of rational functions. Also we denote by $\Bir(X)$ the group of birational automorphisms of $X$ and by $\Aut(X)$ the group of regular automorphisms of $X$. Let~$\pi \colon X \xrightarrow{} B$ be a morphism of algebraic varieties; then by $\Bir(X,\pi)$ we denote the group that consists of all birational automorphisms of~$X$ mapping the fibers of $\pi$ again to the fibers of $\pi$. 

\textbf{Acknowledgements.} I would like to thank my advisor Constantin Shramov for stating the problem and useful discussions.

\section{Conic bundles}\label{sect: conic bundles}

All the results of this section are taken from \cite[Section 4, Section 6]{main}. We merely rewrite them slightly, so that they are applicable in the conic bundle setting.

\begin{lemma} \label{elementOrder}
Let $\KK \supset \FF_q$ be a purely transcendental field extension, and let $\LL \supset \KK$ be a field extension of finite degree $k$. Suppose that the multiplicative group $\LL^*$ contains an element $\zeta$ of finite order $m$. Then $m$ divides $q^k - 1$. 
\end{lemma}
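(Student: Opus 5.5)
The plan is to show that $\zeta$ lies in a finite field $\FF_{q^d}$ with $d \mid k$, sitting inside $\LL$. Since $\zeta$ has finite order $m$, it is a root of $x^m - 1$. Hence $\zeta$ is algebraic over the prime field, and therefore over $\FF_q$. Put $\FF = \FF_q(\zeta) \subset \LL$. This is a finite field $\FF_{q^d}$, where $d = [\FF:\FF_q]$. Its multiplicative group is cyclic of order $q^d - 1$, so $m$ divides $q^d - 1$. Since $d \mid k$ implies $(q^d-1) \mid (q^k-1)$, it remains to prove that $d$ divides $k$.

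For this, consider the compositum $\KK(\zeta) = \KK\FF \subset \LL$. By the tower law, $[\KK(\zeta):\KK]$ divides $k = [\LL:\KK]$. It therefore suffices to show that $[\KK(\zeta):\KK] = d$, or equivalently that the minimal polynomial $f$ of $\zeta$ over $\FF_q$ stays irreducible over $\KK$.

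Here I use that $\KK$ is purely transcendental over $\FF_q$, so $\FF_q$ is algebraically closed in $\KK$. Indeed, an element of $\KK = \FF_q(t_1,\dots)$ that is algebraic over $\FF_q$ lies in $\FF_q$: any such element involves only finitely many variables, and $\FF_q(t_1,\dots,t_n)$ has constant field $\FF_q$, by a degree argument in one variable at a time.

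Now suppose $f = gh$ over $\KK$ with $g$ monic of positive degree less than $\deg f$. The coefficients of $g$ are symmetric functions of some roots of $f$. Those roots are algebraic over $\FF_q$, so the coefficients are algebraic over $\FF_q$ and lie in $\KK$, hence in $\FF_q$. This contradicts the irreducibility of $f$ over $\FF_q$. So $[\KK(\zeta):\KK] = \deg f = d$, hence $d \mid k$, and the lemma follows.

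The only step with real content is the algebraic closedness of $\FF_q$ in $\KK$. I expect it to be routine, for instance by viewing an algebraic element as a rational function in $t_1$ over $\FF_q(t_2,\dots,t_n)$ and observing that a non-constant rational function is transcendental.
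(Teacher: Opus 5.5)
Your proof is correct. The paper gives no argument of its own here, only the reference \cite[Lemma 4.2]{main}, and what you wrote is the standard self-contained proof: $\zeta$ generates $\FF_q(\zeta)=\FF_{q^d}\subset\LL$, and since $\FF_q$ is algebraically closed in the purely transcendental field $\KK$, the minimal polynomial of $\zeta$ stays irreducible over $\KK$, so $d=[\KK(\zeta):\KK]$ divides $k$ and $m\mid q^d-1\mid q^k-1$.
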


\begin{proof}
See \cite[Lemma 4.2]{main}.
\end{proof}

\begin{lemma} \label{autOrder}
Let $\KK \supset \FF_q$ be a purely transcendental field extension, let $\LL \supset \KK$ be a field extension of degree $k$, and let $Q$ be a conic over $\LL$. Suppose that the group $\Aut(Q)$ contains an element of finite order $m$ coprime to $q$. Then $m$ divides $q^{2k} - 1$.
\end{lemma}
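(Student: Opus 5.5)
We reduce to Lemma~\ref{elementOrder} by passing to a quadratic extension where the conic becomes split. Since $Q$ is a conic over $\LL$, there is an extension $\LL' \supset \LL$ of degree at most $2$ such that $Q_{\LL'}$ has an $\LL'$-point; for instance, adjoin the coordinates of a point of intersection of $Q$ with a line defined over $\LL$. Then $Q_{\LL'} \simeq \PP^1_{\LL'}$, and $\Aut(Q)$ embeds into $\Aut(Q_{\LL'}) \simeq \PGL_2(\LL')$. Let $g \in \Aut(Q)$ have finite order $m$ coprime to $q$. Its image in $\PGL_2(\LL')$ has the same order $m$. The extension $\LL' \supset \KK$ has degree $k' = [\LL':\LL]\,k$, which divides $2k$.

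Next we produce a root of unity of order $m$ in a small extension. Lift $g$ to a matrix $A \in \GL_2(\LL')$. Since $m$ is coprime to $p$, the element $g$ is semisimple. Concretely, $A^m = \lambda I$ for some scalar $\lambda$, so the polynomial $x^m - \lambda$ is separable and $A$ is diagonalizable over a splitting field of its characteristic polynomial. That splitting field is an extension $\LL''$ of $\LL'$ of degree at most $2$. Over $\LL''$ we may take $A = \mathrm{diag}(\alpha, \beta)$, and then $g$ corresponds to the ratio $\zeta = \alpha/\beta \in (\LL'')^*$. The order of $g$ in $\PGL_2$ equals the multiplicative order of $\zeta$, so $\zeta$ has order exactly $m$.

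It remains to count degrees. This step is the main obstacle, since the naive bound $[\LL'':\KK] \le 4k$ only gives $m \mid q^{4k}-1$ via Lemma~\ref{elementOrder}. We split into cases.

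If the eigenvalues of $A$ lie in $\LL'$, then $\zeta \in \LL'$, and Lemma~\ref{elementOrder} applied to $\LL' \supset \KK$ gives $m \mid q^{k'} - 1$. This divides $q^{2k}-1$ because $k' \mid 2k$.

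If the eigenvalues do not lie in $\LL'$, then $\LL'' / \LL'$ is quadratic and $\alpha, \beta$ are conjugate under its Galois involution $\sigma$. Hence $\sigma(\zeta) = \zeta^{-1}$, so $\zeta + \zeta^{-1} \in \LL'$. We use the norm trick: $N(\zeta) = \zeta \cdot \sigma(\zeta) = 1$. Since $\zeta$ generates a cyclic group of order $m$, that group lies in the multiplicative group of the algebraic closure of $\FF_q$ inside $\LL''$. Lemma~\ref{elementOrder} applied to $\LL''$ tells us that this algebraic closure is a finite field $\FF_{q^d}$ with $d \mid [\LL'':\KK]$; more precisely, the constant field of a degree-$n$ extension of $\KK$ is $\FF_{q^d}$ with $d \mid n$. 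The field $\FF_q(\zeta)$ is then $\FF_{q^d}$ with $d \mid 2k'$.

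When $k' = 2k$ we only get $d \mid 4k$, and we must improve this. If $\FF_q(\zeta) \not\subset \LL'$, then $\sigma$ restricts to the Frobenius power of order $2$ on $\FF_q(\zeta)$. So $\zeta^{q^{d/2}} = \zeta^{-1}$, which gives $m \mid q^{d/2}+1$, and $q^{d/2}+1$ divides $q^{2k}-1$ provided $d/2 \mid k$. To reach that divisibility we choose the splitting field $\LL'$ of $Q$ cleverly, combining it with the eigenvalue field: pass from $\LL$ directly to $\LL(\zeta)$, which has degree at most $2$ over $\LL$ when $g$ fixes a point of $Q$ over a quadratic extension. This holds because the fixed points of $g$ on $Q$ form a $g$-invariant degree-$2$ subscheme defined over $\LL$. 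At such a fixed point over an extension $\LL_1$ with $[\LL_1:\LL] \le 2$, the point splits $Q$, and $g$ acts on its tangent line by the scalar $\zeta$. Thus $\zeta \in \LL_1^*$, and Lemma~\ref{elementOrder} gives $m \mid q^{2k}-1$ directly. This fixed-point argument is the route we take, and it makes the earlier case analysis unnecessary.
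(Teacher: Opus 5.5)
Your final fixed-point argument is correct and is essentially the paper's proof: semisimplicity of $g$ gives two fixed points on $Q_{\overline{\LL}}$, hence a fixed point over an extension $\LL_1$ with $[\LL_1:\LL]\leqslant 2$, where $g$ acts on the tangent line by a scalar of order exactly $m$, and Lemma~\ref{elementOrder} applied to $\LL_1\supset\KK$ finishes. The preliminary detour (splitting $Q$ over $\LL'$, the norm relation, the Frobenius case analysis) is unnecessary and left unfinished, so you should drop it.
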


\begin{proof}
Let $g \in \Aut(Q)$ be an element of order $m$. Since $m$ is coprime to $q$, we conclude that $g$ is a semi-simple element in
$$\Aut(Q_{\overline{\LL}}) \cong \PGL_2(\overline{\LL}).$$
Hence $g$ has exactly two fixed points on $Q_{\overline{\LL}} \cong \PP^1_{\overline{\LL}}$. Thus there exists a quadratic extension~$\relpenalty = 10000 \LL' \supset \LL$ such that $g$ has a fixed point $P$ on $Q_{\LL'}$. Therefore $g$ acts faithfully on the Zariski tangent space $T_P(Q_{\LL'}) \cong \LL'$, so it is an element of finite order of $(\LL')^*$. As $\LL' \supset \KK$ is an extension of order $2k$, the assertion follows from Lemma \ref{elementOrder}.
\end{proof}

\begin{lemma} \label{PSLSize}
Let $\KK \supset \FF_q$ be a purely transcendental field extension, let $\LL \supset \KK$ be a field extension of degree $k$, and let $Q$ be a conic over $\LL$. Then the group $\Aut(Q)$ does not contain subgroups isomorphic to $\PSL_2(\FF_{p^r})$ for $p^r > q^k$.
\end{lemma}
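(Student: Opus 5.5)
The strategy is to find, inside any copy of $\PSL_2(\FF_{p^r})$, an element of large order coprime to $q$, and then apply Lemma~\ref{autOrder}. That lemma says the order of such an element divides $q^{2k}-1$, so it cannot exceed $q^{2k}-1$. For this to give a contradiction we need an element of order bigger than $q^{2k}-1$ once $p^r>q^k$.

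The natural candidate comes from a non-split torus. The group $\PSL_2(\FF_{p^r})$ contains a cyclic subgroup of order $(p^r+1)/\gcd(2,p^r-1)$, the image of $\FF_{p^{2r}}^*$ elements of norm one. Its order is coprime to $p$, hence coprime to $q$. It also contains a split torus of order $(p^r-1)/\gcd(2,p^r-1)$. Since $p^r$ and $q^k$ are both powers of $p$, the hypothesis $p^r>q^k$ gives $p^r\geqslant pq^k$.

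A direct size comparison fails here: $(p^r+1)/2$ is about $p q^k/2$, which is far below $q^{2k}-1$. So we use divisibility instead of size. Suppose $\PSL_2(\FF_{p^r})\subset\Aut(Q)$. Applying Lemma~\ref{autOrder} to generators of both tori, both $(p^r+1)/d$ and $(p^r-1)/d$ divide $q^{2k}-1$, where $d=\gcd(2,p^r-1)$.

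Write $q^k=p^s$ with $s<r$. Every element of $\PSL_2(\FF_{p^r})$ of order coprime to $p$ has order dividing $(p^r\pm1)/d$. I would use a Zsigmondy-type argument. Pick a primitive prime divisor $\ell$ of $p^{2r}-1$, meaning $\ell\nmid p^j-1$ for every $j<2r$. Then $\ell\mid p^r+1$, and $\ell$ is odd, so $\ell\mid (p^r+1)/d$. Hence $\ell\mid p^{2s}-1$. But $2s<2r$, which contradicts primitivity.

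The exceptions to Zsigmondy's theorem for $p^{2r}-1$ are $2r=2$ with $p+1$ a power of $2$, and $(p,2r)=(2,6)$. In these cases $\PSL_2(\FF_{p^r})$ has order at most a few hundred, and I would treat them directly.
\begin{itemize}
\item If $r=1$, then $s=0$, so $q^k=1$, which is impossible. Hence $p^r>q^k$ forces $r\geqslant 2$ whenever $s\geqslant1$, and the first family of exceptions never occurs.
\item If $p=2$ and $r=3$, then $s\in\{1,2\}$. The group $\PSL_2(\FF_8)$ contains an element of order $9$, and $9\nmid 2^{2s}-1\in\{3,15\}$, which is a contradiction.
\end{itemize}

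The main obstacle is recognising that a crude size bound is insufficient and that an arithmetic input is needed. If one wants to avoid citing Zsigmondy, an alternative is to note that $\gcd(p^r+1,p^{2s}-1)$ is small when $s<r$, using $\gcd(p^a-1,p^b-1)=p^{\gcd(a,b)}-1$. This forces $(p^r+1)/d\leqslant p^{\gcd(2r,2s)}+1$, which is again at most $p^s+1<(p^r+1)/d$ apart from tiny cases that are handled as above.
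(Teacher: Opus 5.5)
Your proof is correct and follows the paper's route. You take the cyclic non-split-torus subgroup of order $(p^r+1)/\gcd(2,p^r-1)$ in $\PSL_2(\FF_{p^r})$, apply Lemma~\ref{autOrder} to get divisibility into $q^{2k}-1=p^{2s}-1$, and obtain an arithmetic contradiction. The only difference is that the paper cites the elementary computation from the proof of \cite[Lemma 4.4]{main}, whereas you use Zsigmondy's theorem and correctly dispose of the exceptions $r=1$ and $(p,r)=(2,3)$. Your closing gcd alternative is misstated: $p^{\gcd(2r,2s)}+1\leqslant p^s+1$ fails whenever $s\mid r$. The main argument does not depend on it.
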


\begin{proof}
Let $q = p^l$. By \cite[Lemma 2.3]{main} the group $\PSL_2(\FF_{p^r})$ contains an element whose order equals $\frac{p^r+1}{2}$ if $p$ is odd and an element whose order equals $2^r + 1$ if $p = 2$. On the other hand, by Lemma \ref{autOrder} the finite orders of elements of the group $\Aut(Q)$ that are coprime to $p$ divide $q^{2k} - 1 = p^{2kl} - 1$.

If $r > kl$, then we can compute that $p^{2kl} - 1$ is not divisible by $\frac{p^r + 1}{2}$ in the case of odd~$p$, and by $2^r + 1$ in the case of $p = 2$ (see the proof of \cite[Lemma 4.4]{main}).
\end{proof}

We will need the following classification.

\begin{theorem}[{see, for example, \cite[Theorem 2.1]{DD}}]\label{PGL}
Let $\LL$ be a field of characteristic $p$, and let $G$ be a finite subgroup of $\PGL_2(\LL)$. Then $G$ is isomorphic to one of the following groups:
\begin{enumerate}
\setlength\itemsep{-1 pt}
\item a dihedral group of order $2m$, $m \geqslant 2$;
\item one of the groups $A_4$, $S_4$ or $A_5$;
\item the group $\PSL_2(\FF_{p^r})$ for some $r \geqslant 1$;
\item the group $\PGL_2(\FF_{p^r})$ for some $r \geqslant 1$;
\item a group of the form $G_p \rtimes \ZZ/m\ZZ$, where $m \geqslant 1$ is coprime to $p$ and $G_p$ is an abelian $p$-group.
\end{enumerate}
\end{theorem}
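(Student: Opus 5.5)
The statement is the classical Dickson classification of finite subgroups of $\PGL_2(\LL)$. Since the paper cites it, I would reduce to the algebraically closed case and follow the standard orbit-counting argument. First, $\PGL_2(\LL)\subset\PGL_2(\overline{\LL})$, and every isomorphism type on the list is preserved under passing to a subgroup-of-an-overgroup. So it suffices to classify finite $G\subset\PGL_2(\overline{\LL})$ acting on $\PP^1_{\overline{\LL}}$. Let $P$ be a Sylow $p$-subgroup of $G$ (trivial if $p=0$ or $p\nmid|G|$). I would split into two cases: $p\nmid |G|$, and $p\mid|G|$.

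\emph{Case $p\nmid|G|$.} Every nontrivial element is semisimple and fixes exactly two points of $\PP^1$, as in the proof of Lemma~\ref{autOrder}. Counting pairs $(g,x)$ with $g\neq 1$ and $gx=x$ gives the usual equation
\[
2\Bigl(1-\frac1{|G|}\Bigr)=\sum_{i}\Bigl(1-\frac1{e_i}\Bigr)
\]
over the non-free orbits, where $e_i$ are the stabilizer orders. The solutions are the familiar ones: cyclic groups (type (5) with $G_p=1$), dihedral groups, and $A_4$, $S_4$, $A_5$. I would still need to identify each group from its orbit data. For that I would use that stabilizers are cyclic: a point stabilizer embeds in $\overline{\LL}^*$ via the tangent action, as in Lemma~\ref{autOrder}.

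\emph{Case $p\mid|G|$.} Nontrivial $p$-elements are unipotent and have a unique fixed point. An abelian $p$-group of such elements therefore fixes one common point $x$. Conjugating $x$ to $\infty$, the stabilizer $G_x$ sits inside the affine group $\overline{\LL}\rtimes\overline{\LL}^*$. Hence $G_x=U\rtimes \ZZ/m\ZZ$, with $U$ an elementary abelian $p$-group and $m$ coprime to $p$.

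If $G$ fixes $x$, we get type (5). Otherwise, the orbit $Gx$ consists of the fixed points of the conjugates of $U$. The subgroups $U$ are Sylow, because any $p$-subgroup fixes a unique point, so $U$ is the full Sylow subgroup. These Sylow subgroups meet trivially. So $G$ acts $2$-transitively on the orbit $\Omega=Gx$ of size $|U|+1$, since $U$ acts regularly on $\Omega\setminus\{x\}$.

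Next, $U$ together with a second Sylow subgroup $U'$ generate $H\subset G$. I would show that $\Omega$ carries the structure of $\PP^1(\FF_{p^r})$ and that $H=\PSL_2(\FF_{p^r})$. The method is to normalize $x=\infty$ and $U'$'s fixed point to $0$, and to show that $U\subset\overline{\LL}$ (translations) together with the scalars $\mu$ arising from $G_{x,0}$ make $U\cup\{0\}$ closed under multiplication. That makes $U\cup\{0\}$ a subfield $\FF_{p^r}$. Then $H$ is generated by $\begin{pmatrix}1&a\\0&1\end{pmatrix}$ and $\begin{pmatrix}1&0\\b&1\end{pmatrix}$ with $a,b\in\FF_{p^r}$, which gives exactly $\mathrm{SL}_2(\FF_{p^r})$ modulo scalars. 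Finally, $G$ normalizes $H$ and acts on $\PP^1(\FF_{p^r})$, so $G\subset N(\PSL_2(\FF_{p^r}))=\PGL_2(\FF_{p^r})$. Index considerations then leave only $\PSL_2$ or $\PGL_2$. Field automorphisms cannot occur, because elements of $\PGL_2(\overline{\LL})$ fixing three points of $\Omega$ are trivial.

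I expect the main obstacle to be the field-closure step: proving that $U\cup\{0\}$ is closed under multiplication. It also has to handle the small degenerate cases $|U|=2,3$, where $\PSL_2(\FF_2)\cong S_3$ and $\PSL_2(\FF_3)\cong A_4$, as well as characteristic $2,3$ coincidences with dihedral, $A_4$ or $A_5$ groups. My approach is to take $h\in G$ swapping $0$ and $\infty$, which exists by $2$-transitivity. Then compute that $h u h^{-1}$ for a translation $u\in U$ is a lower unipotent matrix, and that products of such elements with $U$ produce diagonal elements $\mathrm{diag}(a,a^{-1})$ normalizing $U$. This shows $a^2U=U$, and a generation argument gives multiplicative closure.
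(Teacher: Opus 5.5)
The paper gives no proof of this statement; it only cites Dolgachev--Duncan. So you are reconstructing Dickson's theorem from scratch. Your tame case is the standard argument and is fine, but your wild case rests on a false step. Because nontrivial elements of $U$ fix only $x$, $U$ acts \emph{freely} on $\Omega\setminus\{x\}$. That gives only $|\Omega|\equiv 1 \pmod{|U|}$. Nothing gives transitivity, and in general it fails. In characteristic $2$, the maps $z\mapsto \zeta z$ ($\zeta^5=1$, $\zeta\neq 1$) and $z\mapsto 1/z$ generate $D_{10}\subset\PGL_2(\overline{\FF}_2)$. Its involutions $z\mapsto \zeta^{-i}/z$ are unipotent with the five distinct fixed points $\zeta^{2i}$. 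Hence $G$ fixes no point and $|U|=2$, yet $|\Omega|=5$ and $U$ has two orbits on $\Omega\setminus\{x\}$. Similarly, $A_5\subset\PSL_2(\FF_9)$ in characteristic $3$ has $10\neq |U|+1$ Sylow $3$-subgroups. Neither group is isomorphic to any $\PSL_2(\FF_{p^r})$ or $\PGL_2(\FF_{p^r})$ in its characteristic, so your case division would misclassify both. These are exactly the situations where types (1) and (2) of the list are needed even though $p$ divides $|G|$. Your remark about ``coincidences'' in characteristics $2,3$ does not cover them, because your tame case produces dihedral groups and $A_5$ only when $p\nmid |G|$.

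What is missing is a counting step in the wild case, done before any field reconstruction. Every element of $\PGL_2(\overline{\LL})$ is, up to scalars, semisimple or unipotent. So nontrivial $p$-elements fix exactly one point and nontrivial $p'$-elements fix exactly two. Write $|N_G(U)|=|U|m$ and $|\Omega|=1+k|U|$, and count pairs $(g,y)$ with $g\neq 1$, $gy=y$, as in your tame case. This shows there is at most one non-free orbit besides $\Omega$. Its stabilizer is cyclic of order $e$, with $|G|/e=2+(|U|-2)(1+k|U|)$. Solve this equation, and also determine whether the second fixed point of a $p'$-element of $N_G(U)$ lies in $\Omega$ or in the other orbit. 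This leaves only four cases: $k=0$ (type (5)); $|U|=2$ (dihedral of order $2n$, $n$ odd); $|U|=3$ and $G\cong A_5$; or $k=1$. Only when $k=1$ is $G$ $2$-transitive on $\Omega$ with $|\Omega|=|U|+1$, and only then does your field-closure step apply. As you note, that step is the core of Dickson's argument, and it is still only sketched in your plan.

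A smaller point: you invoke a common fixed point for an ``abelian $p$-group'' before knowing that the Sylow $p$-subgroup is abelian. First take a nontrivial central element; its unique fixed point is then fixed by the whole Sylow subgroup.
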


\begin{lemma} \label{char}
Let $\KK \supset \FF_q$ be a purely transcendental field extension and let $\LL \supset \KK$ be a finite extension of degree $k$. Let $Q$ be a conic over $\LL$, and let $G$ be a finite subgroup of~$\Aut(Q)$. Then $G$ contains a characteristic abelian subgroup of index at most
$$J = \max\{q^k(q^{2k} - 1), 60\}.$$
\end{lemma}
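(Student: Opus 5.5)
Since $G$ is finite and $\Aut(Q)\subset \Aut(Q_{\overline{\LL}})\cong\PGL_2(\overline{\LL})$, Theorem~\ref{PGL} applies and I will go through its five cases, producing in each a characteristic abelian subgroup of index at most $J$.

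\emph{Case (1), dihedral of order $2m$.} If $m\geqslant 3$, take the cyclic subgroup of order $m$; it is characteristic because it is the unique cyclic subgroup of index $2$. Its index is $2\leqslant 60$. If $m=2$, the group is abelian and we take $G$ itself.

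\emph{Case (2), $A_4$, $S_4$ or $A_5$.} Take the trivial subgroup. Its index is at most $60$.

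\emph{Cases (3) and (4), $\PSL_2(\FF_{p^r})$ or $\PGL_2(\FF_{p^r})$.} Lemma~\ref{PSLSize} gives $p^r\leqslant q^k$; in case (4) this uses that $\PGL_2(\FF_{p^r})$ contains $\PSL_2(\FF_{p^r})$. Take the trivial subgroup. Its index is
\[|\PGL_2(\FF_{p^r})| = p^r(p^{2r}-1)\leqslant q^k(q^{2k}-1).\]
For small $p^r$ where $\PSL_2$ is solvable, such as $p^r\in\{2,3\}$, the trivial subgroup still works with the same bound.

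\emph{Case (5), $G\cong G_p\rtimes \ZZ/m\ZZ$.} Here $G_p$ is the unique Sylow $p$-subgroup, since it is normal. Hence $G_p$ is characteristic and abelian, of index $m$. By Lemma~\ref{autOrder}, since $m$ is coprime to $p$ and occurs as an element order, $m$ divides $q^{2k}-1$. Therefore the index is at most $q^{2k}-1\leqslant J$.

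\emph{Main obstacle.} The delicate point is case (5), because $G_p$ could be large, so I cannot simply take the trivial subgroup; instead I use the canonical normal Sylow subgroup. I also need to check that the $p$-subgroup really is normal and characteristic in this description, which holds because it is the full Sylow $p$-subgroup. Everything else reduces to the order bounds from Lemmas~\ref{autOrder} and~\ref{PSLSize}.
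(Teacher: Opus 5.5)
Your proof is correct and follows the paper's argument essentially verbatim: the same case split along Theorem~\ref{PGL}, with Lemma~\ref{PSLSize} bounding types (3)--(4) and Lemma~\ref{autOrder} bounding the index of the unique (hence characteristic) Sylow $p$-subgroup in type (5). Your separate treatment of the dihedral case with $m=2$ is slightly more careful than the paper, which asserts a characteristic cyclic subgroup of index $2$ without exception; that assertion fails for the Klein four-group, but the group is then abelian, exactly as you note.
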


\begin{proof}
Consider the cases of Theorem \ref{PGL}. If $G$ is of type (1), then it has a cyclic characteristic subgroup of index $2$. If $G$ is of type (2) then $|G| \leqslant 60$. If $G$ is of type (3) or (4), then $$|G| \leqslant |\PGL_2(\FF_{q^k})| = q^k(q^{2k} - 1)$$
by Lemma \ref{PSLSize}.
If $G$ is of type (5), then there is a unique Sylow $p$-subgroup $G_{(p)}$ in $G$, which is therefore characteristic; the index of $G_{(p)}$ in $G$ is at most $q^{2k} - 1$ by Lemma \ref{autOrder}. Taking the maximum we obtain the required value of $J$.
\end{proof}

\section{Jordan property for surfaces}\label{sect: Jordan property for surfaces}

In this section we combine together the results of the previous section to prove the existence of normal abelian subgroups of finite and bounded index in the group of birational automorphisms of a conic bundle.

Let us start with recalling the standard definition. 

\begin{definition}\label{def: gonality}
    Let $C$ be an algebraic curve over a field $\KK$. The $\emph{gonality}$ of $C$ is the lowest degree of a non-constant rational map from $C$ to $\PP^1_{\KK}$. We denote this by $\gon(C)$. Equivalently, one can define the gonality of $C$ by the following formula  
    \[
    \gon(C) = \min_{f \in \KK(C)}[\KK(C):\KK(f)].
    \]
\end{definition}

Also, recall the Hasse--Weil bound.

\begin{theorem}[{see \cite[Corollaire 3]{Weil}}]\label{theo: Hasse - Weil}
Let $C$ be a smooth projective geometrically irreducible curve of genus $g$
over a finite field $\FF_q$ and $N$ be the number of its $\FF_q$-points. Then
\[
\bigl|N-(q+1)\bigr|\leqslant 2g\sqrt q.
\]
Equivalently,
\[
q+1-2g\sqrt q \leqslant N \leqslant q+1+2g\sqrt q.
\]
\end{theorem}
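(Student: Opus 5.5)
The plan is to follow Weil's original strategy through intersection theory on the surface $C\times C$, after base change to $\overline{\FF}_q$. Let $\overline C = C_{\overline{\FF}_q}$ and let $F\colon \overline C\to\overline C$ be the $q$-power Frobenius morphism. Its fixed points are exactly the points coming from $\FF_q$-points of $C$, so $N=\#\{x : F(x)=x\}$. On $X=\overline C\times\overline C$ I will consider three classes: the graph $\Gamma_F$, the diagonal $\Delta$, and the fibre classes $e_1=\{pt\}\times\overline C$, $e_2=\overline C\times\{pt\}$.

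The first step is to compute the relevant intersection numbers.
\begin{itemize}
\item $N=\Gamma_F\cdot\Delta$, and this intersection is transversal. Indeed, $dF=0$, so at every intersection point the tangent directions of $\Gamma_F$ and $\Delta$ differ; this is exactly where characteristic $p$ enters.
\item $\Delta^2=2-2g$ by adjunction.
\item $\Gamma_F^2=q(2-2g)$, since $\Gamma_F\cong\overline C$ and its normal bundle is identified with $F^*T_{\overline C}$, which has degree $q(2-2g)$.
\item $\Gamma_F\cdot e_1=1$, $\Gamma_F\cdot e_2=q$, $\Delta\cdot e_i=1$, $e_1\cdot e_2=1$, $e_i^2=0$.
\end{itemize}

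The second step, which I expect to be the main obstacle, is the Hodge index theorem on $X$. I would either prove it or cite it via Riemann--Roch on surfaces. The form I need is the Castelnuovo--Severi inequality: for any divisor $D$ on $X$, writing $d_1=D\cdot e_1$ and $d_2=D\cdot e_2$,
\[
D^2\leqslant 2d_1d_2 .
\]
This follows because $e_1+e_2$ is ample, and $D-d_2e_1-d_1e_2$ is orthogonal to both $e_1$ and $e_2$, hence to $e_1+e_2$, so it has non-positive self-intersection.

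The third step applies this to $D=r\Gamma_F+s\Delta$ with $r,s\in\ZZ$. Here $d_1=r+s$ and $d_2=qr+s$. Expanding $D^2=r^2q(2-2g)+2rsN+s^2(2-2g)$ and comparing with $2(r+s)(qr+s)$ gives
\[
g q r^2-(q+1-N)rs+g s^2\geqslant 0 \quad\text{for all } r,s\in\ZZ.
\]
By homogeneity the inequality holds for all rational $r,s$, and by continuity for all real $r,s$. A nonnegative binary quadratic form has nonpositive discriminant, so
\[
(q+1-N)^2\leqslant 4g^2q,
\]
which is $|N-(q+1)|\leqslant 2g\sqrt q$.

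Geometric irreducibility and smoothness of $C$ are used to make $\overline C$ a smooth connected curve of genus $g$, so that adjunction and the Hodge index theorem apply. The second displayed form of the statement is then just a rewriting of the first.

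An alternative route, if one prefers to avoid surface theory, is Stepanov--Bombieri. One constructs, via Riemann--Roch, a function vanishing to high order at all $\FF_q$-points, which gives the upper bound $N\leqslant q+1+(2g+1)\sqrt q$ for $q$ a large square. One then applies this over all extensions $\FF_{q^n}$, using the rationality of the zeta function, to sharpen the bound to $2g\sqrt q$ and to get the matching lower bound. I would present the intersection-theoretic argument as the main plan, since its only nontrivial input is the Hodge index theorem.
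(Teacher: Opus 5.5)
Your argument is correct. The paper gives no proof of this statement; it only quotes Weil's Corollaire~3. So the relevant comparison is with the cited source, and what you wrote is the Mattuck--Tate/Grothendieck version of Weil's proof. Weil derived the bound from the positivity of the trace form on correspondences of $C$ (Castelnuovo's inequality). Your Castelnuovo--Severi inequality $D^2\leqslant 2d_1d_2$, obtained from the Hodge index theorem on $\overline C\times\overline C$ with the ample class $e_1+e_2$, is precisely that positivity. Your intersection numbers are all right: $\Gamma_F\cdot\Delta=N$ by transversality from $dF=0$, $\Gamma_F^2=q(2-2g)$, and $\Gamma_F\cdot e_2=q$.

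The only slip is a sign in the third step. Expanding $2(r+s)(qr+s)-D^2$ actually gives
\[
gqr^2+(q+1-N)rs+gs^2\geqslant 0,
\]
with a plus sign in the middle. This is harmless. The inequality holds for all $r,s\in\ZZ$, so substituting $s\mapsto -s$ gives your form. The discriminant bound $(q+1-N)^2\leqslant 4g^2q$ is unchanged, including the degenerate case $g=0$, where it forces $N=q+1$.

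Compared with simply citing Weil, your route makes the statement self-contained modulo the Hodge index theorem. It also produces the upper and lower bounds simultaneously, which matters because the paper uses both. The lower bound supplies an $\FF_q$-point when $g=1$ in Lemma~\ref{gonality}, and the upper bound controls $|\Aut(C)|$ in Proposition~\ref{prop: g>=1}. The Stepanov--Bombieri alternative you mention would additionally need the zeta-function argument to reach the lower bound.
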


\begin{lemma} \label{gonality}
Let $C$ be a smooth geometrically irreducible curve of genus $g$ over a finite field $\FF_q$. Then its gonality does not exceed
\begin{itemize}
\setlength\itemsep{-1 pt}
\item $\gon(C) = 1$ if $g = 0$;
\item $\gon(C) = 2$ if $g = 1$;
\item $\gon(C) \leqslant 2g - 2$ if $g \geqslant 2$.
\end{itemize}
\end{lemma}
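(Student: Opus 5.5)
The plan is to produce, in each case, a non-constant rational function $f \in \FF_q(C)$ of small degree, i.e.\ a finite morphism $C \to \PP^1_{\FF_q}$. The main tools are Riemann--Roch over $\FF_q$ and, for the existence of suitable rational points or divisors, the Hasse--Weil bound of Theorem~\ref{theo: Hasse - Weil}. Since $C$ is smooth, projective and geometrically irreducible, Riemann--Roch holds over $\FF_q$ for $\FF_q$-rational divisors. The plan is to exhibit an effective $\FF_q$-divisor $D$ with $\ell(D)\geqslant 2$. Then a non-constant $f\in L(D)$ has polar divisor bounded by $D$, so $[\FF_q(C):\FF_q(f)]=\deg (f)_\infty \leqslant \deg D$.

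For $g=0$, I would use the canonical class. The canonical divisor $K$ is defined over $\FF_q$ and has degree $-2$, so $-K$ has degree $2$ and gives an embedding of $C$ as a conic. A conic over a finite field has an $\FF_q$-point, by Chevalley--Warning or by Hasse--Weil with $g=0$, which gives $N=q+1>0$. Taking $D=P$ for a rational point $P$, Riemann--Roch gives $\ell(P)=2$. Hence there is a function of degree $1$, and $C\cong\PP^1$.

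For $g=1$, Hasse--Weil gives $N\geqslant q+1-2\sqrt q=(\sqrt q-1)^2>0$ whenever $q>1$, so there is an $\FF_q$-point $P$. Taking $D=2P$, Riemann--Roch gives $\ell(2P)=2$, so we get a function of degree at most $2$. Degree $1$ is impossible, since then $C\cong\PP^1$ would have genus $0$. Hence $\gon(C)=2$.

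For $g\geqslant 2$, I would avoid needing rational points, since Hasse--Weil may give $N=0$ when $g$ is large. Instead I would use the canonical divisor $K$, which is rational over $\FF_q$ and has $\ell(K)=g\geqslant 2$. I expect the main obstacle to be that $K$ need not be effective over $\FF_q$ a priori. This is handled by taking $K$ to be the divisor of a rational differential form $\omega$ defined over $\FF_q$: since $H^0(C,\Omega^1)$ is a $g$-dimensional $\FF_q$-vector space, we can choose $\omega\neq 0$ regular, so $K=\mathrm{div}(\omega)\geqslant 0$ has degree $2g-2$. Now pick a second regular differential $\omega'$ linearly independent from $\omega$, and set $f=\omega'/\omega$. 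This $f$ is non-constant and lies in $L(K)$, so
\[
\gon(C)\leqslant \deg f \leqslant \deg K = 2g-2.
\]
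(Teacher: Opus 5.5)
Your proof is correct and is essentially the paper's: for $g=0$ and $g=1$ you obtain an $\FF_q$-point (Chevalley, resp.\ Hasse--Weil) and use $|P|$, resp.\ $|2P|$, to map to $\PP^1$, and for $g\geqslant 2$ you use the canonical linear system. Your function $f=\omega'/\omega$ is the paper's projection of the canonical map from a codimension-$2$ subspace, written in coordinates; because you only need $\deg(f)_\infty\leqslant\deg K$ and not an embedding, you can skip the paper's split into hyperelliptic and non-hyperelliptic curves.
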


\begin{proof}
If $g = 0$, then $C$ is a conic. Then $C$ has a $\FF_q$-point by Chevalley theorem, which means that $C \simeq \PP_{\FF_q}^1$ and its gonality is equal to $1$. If $g = 1$, then its gonality automatically is greater than $1$. Applying Theorem~\ref{theo: Hasse - Weil}, we obtain that the number of points on $C$ is greater or equal to $$q + 1 - 2g\sqrt{q} = q + 1 - 2\sqrt{q} = (1 - \sqrt{q})^2 > 0.$$ In particular $C$ contains a $\FF_q$-point $P$, the linear system $|2P|$ gives us a double cover of~$\PP^1$ and $\gon(C) = 2$.

Consider $g \geqslant 2$. If $C$ is hyperelliptic, then $\gon(C) = 2 \leq 2g -2$. If $C$ is not hyperelliptic, then the canonical system $K_C$ gives us an embedding $f \colon C \to \PP^{g - 1}$ such that the image has degree $2g - 2$. We can now project it to $\PP^1$ from a subspace of codimension $2$ and get a $(2g-2)$-cover of $\PP^1$, which implies $\gon(C) \leqslant 2g-2$.
\end{proof}

\begin{theorem}\label{theo: main1}
Let $\FF_q$ be a finite field. Let $\varphi \colon S \to C$ be a two-dimensional conic bundle over $\FF_q$, where $C$ is a curve of gonality $k$. Then~$\Bir(S, \varphi)$ contains a normal abelian subgroup of index at most
$$J_{\CB}(C) = |\Aut(C)|\cdot\max\{q^k(q^{2k}-1), 60\}.$$
\end{theorem}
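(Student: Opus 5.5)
The plan is to split $\Bir(S,\varphi)$ into a "fiberwise" part and a "base" part, and then apply Lemma~\ref{char} to the fiberwise part. As usual, the statement is understood for every finite subgroup $G\subset\Bir(S,\varphi)$: we need a normal abelian subgroup of $G$ of index at most $J_{\CB}(C)$. We may assume $C$ is smooth and projective.

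\textbf{Step 1: the exact sequence.} Every element of $\Bir(S,\varphi)$ maps fibers to fibers. Hence it preserves the subfield $\varphi^*\FF_q(C)\subset\FF_q(S)$ and induces an automorphism of $\FF_q(C)$, that is, an element of $\Bir(C)=\Aut(C)$. This gives a homomorphism $\Bir(S,\varphi)\to\Aut(C)$. Its kernel consists of the birational automorphisms of $S$ over $\FF_q(C)$. These are exactly the birational automorphisms of the generic fiber $Q=S_\eta$, which is a conic over $\LL=\FF_q(C)$. Since $Q$ is a smooth projective curve, its birational automorphisms are regular, so we get an exact sequence
$$1\to\Aut(Q)\to\Bir(S,\varphi)\to\Aut(C).$$
We also need $\Aut(C)$ to be finite, since otherwise the bound is vacuous. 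For $g\geqslant 2$ this is classical. For $g\leqslant 1$ over $\FF_q$ it follows because automorphisms are defined over $\FF_q$ and are controlled by finitely many $\FF_q$-points and a finite group of translations or $\PGL_2(\FF_q)$.

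\textbf{Step 2: the fiberwise part.} Choose $f\in\FF_q(C)$ with $[\FF_q(C):\FF_q(f)]=k=\gon(C)$. Then $\KK=\FF_q(f)$ is a purely transcendental extension of $\FF_q$, and $\LL\supset\KK$ has degree $k$, so we are exactly in the setting of Section~\ref{sect: conic bundles}. Put $G_F=G\cap\Aut(Q)$. It is the kernel of $G\to\Aut(C)$, so it is normal in $G$ and $[G:G_F]\leqslant|\Aut(C)|$. By Lemma~\ref{char}, $G_F$ contains a characteristic abelian subgroup $A$ with
$$[G_F:A]\leqslant\max\{q^k(q^{2k}-1),60\}.$$

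\textbf{Step 3: normality and the bound.} A characteristic subgroup of a normal subgroup is normal, so $A$ is normal in $G$. Moreover,
$$[G:A]=[G:G_F]\,[G_F:A]\leqslant|\Aut(C)|\cdot\max\{q^k(q^{2k}-1),60\}=J_{\CB}(C).$$
This is exactly why Lemma~\ref{char} is stated with \emph{characteristic} rather than merely normal subgroups.

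\textbf{Main obstacle.} The delicate point is Step~1. One has to check that the induced map on the base really lands in $\Aut(C)$ and that its kernel is precisely $\Aut(Q)$ for the generic fiber over the function field. This requires the identification of fiber-preserving birational maps acting trivially on $\FF_q(C)$ with $\LL$-birational self-maps of $Q$. The finiteness of $\Aut(C)$ must also be checked, but it is a side remark. Once these are in place, the rest is a formal consequence of Lemma~\ref{char}.
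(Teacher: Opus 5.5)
Your proposal is correct and follows essentially the same route as the paper: the homomorphism $\Bir(S,\varphi)\to\Aut(C)$ with kernel inside $\Aut(Q)$ for the generic fiber $Q$, then Lemma~\ref{char} over $\FF_q(C)\supset\FF_q(f)$ of degree $k=\gon(C)$, and the fact that a characteristic subgroup of a normal subgroup is normal. Working with a finite subgroup $G$ and applying Lemma~\ref{char} to $G\cap\Aut(Q)$ is the correct reading of the paper's argument; the paper applies that lemma directly to the infinite group $\Bir_\varphi(S)$, although the lemma is stated only for finite subgroups.
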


\begin{proof}
The group $\Bir(S, \varphi)$ fits into the exact sequence
$$1 \to \Bir_\varphi(S) \to \Bir(S, \varphi) \to \Aut(C),$$
since birational automorphisms of smooth curves are regular (see, for example,~\hbox{\cite[Corollary II.2.4.1]{Sil}}). So the index of $\Bir_\varphi(S)$ in $\Bir(S, \varphi)$ is at most $|\Aut(C)|$.
On the other hand, $\Bir_\varphi(S)$ is a subgroup of the automorphism group of the scheme-theoretic generic fiber~$Q$ of $\varphi$. Observe that $Q$ is a conic over $\FF_q(C)$, which is a degree~$k$ extension of purely transcendental extension $\KK \supset \FF_q$. So by Lemma \ref{char} the group $\Bir_\varphi(S)$ contains a characteristic abelian subgroup $A$ of index at most $$J = \max\{q^k(q^{2k}-1), 60\}.$$
Therefore $A$ is a normal abelian subgroup of $\Bir(S, \varphi)$ and its index does not exceed 
\[
J_{\CB} = J \cdot |\Aut(C)|.
\]
\end{proof}

Before proving Proposition~\ref{prop: g>=1} we state a theorem proved by Stichtenoth (see~\cite{Sti73}), providing an upper bound on the automorphism group of curves of genus $g \geqslant 2$ over fields of positive characteristic.

\begin{theorem}[Stichtenoth~\cite{Sti73}]\label{theo: Stichtenoth}
Let $C$ be a genus $g \geqslant 2$ irreducible curve defined over a field~$\KK$ of characteristic $p > 0$. Then
$$|\Aut(C)| < 16g^4,$$
unless $C$ is the curve with equation
$$y^{p^n} + y = x^{p^{n} + 1},$$
in which case it has genus $g = \frac{1}{2}p^n(p^n - 1)$ and $|\Aut(C)| = p^{3n}(p^{3n} + 1)(p^{2n} - 1)$.
\end{theorem}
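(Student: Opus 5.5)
The plan is to follow Stichtenoth's original strategy: ramification theory of the quotient map $C\to C/G$ for $G=\Aut(C)$, with the wild case treated separately. Since $\Aut(C)$ can only grow under extension of scalars, I first replace $\KK$ by $\overline{\KK}$. For $g\geqslant 2$ the group $G=\Aut(C)$ is finite. I then consider $\pi\colon C\to C/G$, with $g_0$ the genus of $C/G$. The Riemann--Hurwitz formula with Hilbert's different formula gives
\[
2g-2=|G|(2g_0-2)+\sum_{Q}\frac{|G|}{|G_P|}\sum_{i\geqslant 0}\bigl(|G_P^{(i)}|-1\bigr),
\]
where $Q$ runs over branch points, $P$ is a point over $Q$, and $G_P^{(i)}$ are the higher ramification groups. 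The stabilizer has the form $G_P=G_P^{(1)}\rtimes \ZZ/m\ZZ$, where $G_P^{(1)}$ is a $p$-group and $m$ is coprime to $p$.

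\textbf{Step 1 (easy cases).} I would show that if $g_0\geqslant 1$, or if every ramification is tame, or if there are at least three branch points, then the classical Hurwitz-type analysis goes through. Each contribution $\sum_i(|G_P^{(i)}|-1)\geqslant |G_P|-1$, so one gets a bound of the form $|G|\leqslant 84(g-1)$ up to a small constant factor. This is far below $16g^4$.

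\textbf{Step 2 (the residual case).} What remains is $g_0=0$ with one or two branch points, at least one of them wild. Here the key intermediate estimate is a bound on the stabilizer of a wildly ramified point $P$. I would aim for
\[
|G_P^{(1)}|\leqslant \frac{4p}{(p-1)^2}\,g^2 \quad\text{and}\quad |G_P|\leqslant 4g^2+2g .
\]
To get these, I would apply Riemann--Hurwitz to $C\to C/G_P^{(1)}$ and to the intermediate quotients by the lower ramification filtration. The jumps $i$ where $G_P^{(i)}$ changes are controlled via Hasse--Arf-type congruences, together with the action of the tame cyclic part on the graded quotients $G_P^{(i)}/G_P^{(i+1)}$. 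Then the orbit of $P$ has size $|G|/|G_P|$, and plugging back into the Riemann--Hurwitz formula for $\pi$ bounds the orbit size by roughly $2g$-ish quantities. The product of the two bounds gives $|G|<16g^4$.

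\textbf{Step 3 (the Hermitian exception).} The main obstacle is the sharp stabilizer bound and the identification of the extremal case. When the estimates are close to equality, I would show that $C/G_P^{(1)}\cong\PP^1$ and that $P$ is the unique ramification point of $C\to C/G_P^{(1)}$. In that situation the function field is an elementary abelian $p$-extension of $\KK(x)$ given by an additive-polynomial equation $A(y)=f(x)$. The growth constraints then force $A(y)=y^{p^n}+y$ and $f=x^{p^n+1}$ up to change of coordinates. This identifies the Hermitian curve, whose genus $\frac12 p^n(p^n-1)$ and automorphism group $\mathrm{PGU}_3(\FF_{p^{2n}})$ of order $p^{3n}(p^{3n}+1)(p^{2n}-1)$ are then computed directly. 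This careful case analysis near equality is where I expect most of the work to lie.
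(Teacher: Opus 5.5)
The paper gives no proof of this statement: it is quoted from \cite{Sti73} and used as a black box. Your reconstruction has a genuine gap in Step~2, because the stabilizer estimate $|G_P|\leqslant 4g^2+2g$ is false.

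Take the genus $2$ curve $y^2+y=x^5$ over $\overline{\FF}_2$. It has the automorphisms $x\mapsto \zeta x+a$, $y\mapsto y+\zeta^2a^8x^2+\zeta a^4x+d$, where $\zeta^5=1$, $a\in\FF_{16}$ and $d^2+d=a^5$. All $160$ of them fix the point at infinity, whereas $4g^2+2g=20$. More generally, $y^2+y=x^{2^k+1}$ has at least $2^{2k+1}(2^k+1)=8g^2(2g+1)$ automorphisms fixing that point. Here the $p$-part attains your first bound $\frac{4p}{(p-1)^2}g^2$ with equality, and the tame part contributes a further factor of order $g$. So point stabilizers can be cubic in $g$. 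In odd characteristic, $x^2=y^q+y$ already gives $|G_P|\geqslant 2q(q-1)=4g(2g+1)$.

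The orbit estimate fails as well. Suppose $G$ has one wild short orbit $O\ni P$ and one tame short orbit with stabilizer of order $e$. Riemann--Hurwitz then reads $|O|\,\bigl(w-1-|G_P|/e\bigr)=2g-2$ with $w=\sum_{i\geqslant1}(|G_P^{(i)}|-1)$, and the bracket can be as small as $1/e$. For the Hermitian curve with $q=3$ it equals $1/7$, so $|O|=28$ while $g=3$. For the Suzuki curves $|O|=q^2+1$ grows like $g^{4/3}$. The bound $|O|\leqslant 2g-2$ comes out only in the single-short-orbit case, where $|O|(d_P-2|G_P|)=2g-2$.

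Steps~2 and~3 are also inconsistent as a strategy. If your two intermediate bounds held for every curve, their product would bound the Hermitian curve too. Yet for $q=3$ one has $|\Aut(C)|=6048>16g^4=1296$. So the Hermitian curve is not a near-equality case of Step~2. It violates both of your estimates, since $|G_P|=q^3(q^2-1)$ and $|O|=q^3+1$. The exception therefore has to be built into the intermediate statements themselves.

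What is missing is a mechanism that does this. A correct argument needs a dichotomy for $S=G_P^{(1)}$. Either $|S|\leqslant 2g$ (by Riemann--Hurwitz for $C\to C/S$), or $C/S\cong\PP^1$ with $P$ the only ramified point. In the second case nontrivial elements of $S$ fix only $P$, so $S$ acts semiregularly on $O\setminus\{P\}$ and $|O|\equiv 1\pmod{|S|}$. This must be followed by a joint analysis of $|S|$, the tame part, $|O|$ and the ramification filtration in the formula above, from which both the bound $16g^4$ and the Hermitian exception have to emerge.

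Two smaller points:
\begin{enumerate}
\item In Step~1, the estimate $\sum_i(|G_P^{(i)}|-1)\geqslant |G_P|-1$ gives nothing for three branch points of signature $(2,2,n)$. There the wild excess only yields $|G|=(2g-2)|G_P|/(w-1)$, so this case needs a bound on $|G_P|$ and is not a routine Hurwitz-type case.
\item In Step~3, $C\to C/G_P^{(1)}$ is not an elementary abelian extension for the Hermitian curve, since $G_P^{(1)}$ is non-abelian of order $q^3$. The additive-polynomial equation lives over the quotient by a smaller subgroup, for example the centre.
\end{enumerate}
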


\begin{corollary}\label{cor: |Aut| <= 75g^4}
    Let $C$ be a genus $g \geqslant 2$ irreducible curve defined over a field~$\KK$ of characteristic $p > 0$. Then $$|\Aut(C)| < 75g^4.$$
\end{corollary}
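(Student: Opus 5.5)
Since Theorem~\ref{theo: Stichtenoth} already gives $|\Aut(C)| < 16g^4 < 75g^4$ for every curve of genus $g \geqslant 2$ other than the exceptional Hermitian-type curve $y^{p^n}+y=x^{p^n+1}$, only the exceptional curve needs work. For it we have $g = \frac12 p^n(p^n-1)$ and $|\Aut(C)| = p^{3n}(p^{3n}+1)(p^{2n}-1)$, and we must show $p^{3n}(p^{3n}+1)(p^{2n}-1) < 75g^4$. This reduces the corollary to a numerical comparison in the single variable $t = p^n \geqslant 2$.

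Rewriting in terms of $t$, we get $75g^4 = \frac{75}{16}t^4(t-1)^4$, while $|\Aut(C)| = t^3(t^3+1)(t^2-1)$. After dividing by $t^3(t-1)$, the inequality to prove becomes
\[
16(t^3+1)(t+1) < 75\,t(t-1)^3.
\]
The leading terms are $16t^4$ on the left and $75t^4$ on the right, so the inequality holds for all sufficiently large $t$. The work is in checking the small values of $t$ directly.

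Since $g \geqslant 2$ forces $t(t-1) \geqslant 4$, we have $t \geqslant 3$; the value $t=2$ gives $g=1$ and is excluded anyway. Check the first cases:
\begin{itemize}
\item For $t=3$: the left side is $16\cdot 28\cdot 4 = 1792$ and the right side is $75\cdot 3\cdot 8 = 1800$, so the inequality holds, though only barely. This tight case is presumably why the constant is $75$.
\item For $t=4$: the left side is $16\cdot 65\cdot 5 = 5200$ and the right side is $75\cdot 4\cdot 27 = 8100$.
\end{itemize}

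For $t \geqslant 4$, a crude monotonicity argument finishes the proof. We have $(t-1)^3 \geqslant \frac{27}{64}t^3$ and $(t^3+1)(t+1) \leqslant \frac{65}{64}t^3\cdot\frac54 t$. It therefore suffices that
\[
16\cdot\tfrac{65}{64}\cdot\tfrac54 < 75\cdot\tfrac{27}{64},
\]
that is, $20.3 < 31.6$, which is true. The only real obstacle is the tightness at $t=3$, and it is handled by the explicit computation above.
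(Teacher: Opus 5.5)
Your proof is correct and follows the paper's argument. Like the paper, you use Theorem~\ref{theo: Stichtenoth} to reduce to the exceptional curve, then compare $p^{3n}(p^{3n}+1)(p^{2n}-1)$ with $75g^4$ as a function of $t=p^n\geqslant 3$, where the tight case $t=3$ gives ratio $224/3$. The only difference is that the paper asserts without proof that $f(t)=16\frac{(t+1)^2(t^2-t+1)}{t(t-1)^3}$ is decreasing on $(1,+\infty)$, whereas you check $t=3$ directly and handle all $t\geqslant 4$ with explicit elementary bounds, which is equally valid and slightly more self-contained.
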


\begin{proof}
    By Theorem~\ref{theo: Stichtenoth} we either have $|\Aut(C)| < 16g^4$ or $C$ is isomorphic to the curve~$C_n$ for some $n \in \ZZ_{\geqslant 1}$ with 
    \[
    g(C_n) = \frac{1}{2}p^n(p^n - 1), \quad |\Aut(C_n)| = p^{3n}(p^{3n} + 1)(p^{2n} - 1).   
    \]
    In the first case we are done. Let us bound the value $\frac{|\Aut(C)|}{g^4}$ in the second case. Consider a rational function
    \[
    f(x) = \frac{x^3(x^3+1)(x^2 - 1)}{(\frac{1}{2}x(x-1))^4} = 16\frac{(x+1)^2(x^2-x+1)}{x(x-1)^3}.
    \]
    Note that 
    \[
    f'(x) \leqslant 0 \;\text{for}\; x\in(1, +\infty), \quad \lim_{x \rightarrow +\infty}f(x) = 16, \quad f(p^n) = \frac{|\Aut(C_n)|}{g(C_n)^4}.
    \]
    Therefore, the value $\frac{|\Aut(C)|}{g^4}$ attains its maximum in the case $p^n=3$, i.e. for $p=3$ and~$n = 1$ (for $p=2$ curve $C_1$ has genus $1$) and this value is $\frac{|\Aut(C_1)|}{g(C_1)^4} = \frac{224}{3} < 75$.
\end{proof}

Now we are able to prove Proposition~\ref{prop: g>=1}.

\begin{proof}[{of Proposition~\ref{prop: g>=1}}]
First, note that $\Bir(S) = \Bir(S,\varphi)$ since there are no non-trivial morphisms from a conic to a curve of genus~$\relpenalty=10000 g \geqslant 1$. Applying Theorem \ref{theo: main1} we obtain that~$\Bir(S)$ contains a normal abelian subgroup of index at most $$J_\CB(C) = |\Aut(C)|\max\{q^{\gon(C)}(q^{2\gon(C)}-1), 60\}.$$ 

If $g = 1$, then we have $\gon(C) = 2$ by Lemma~\ref{gonality}. To bound the group $|\Aut(C)|$ in this case, note that the automorphisms of elliptic curve are generated by translations and the automorphisms with a fixed point. The number of translations is equal to the number of points on the curve and the number of automorphisms that preserve the given point is less or equal to $24$ (see~\hbox{\cite[Theorem III.10.1]{Sil}}). Hence, applying Theorem~\ref{theo: Hasse - Weil} we get $$|\Aut(C)| \leqslant 24(1 + q + 2\sqrt{q}).$$ This proves the first assertion.

If $g \geqslant 2$, then we have $\gon(C) \leqslant 2g-2 $ by Lemma \ref{gonality}. Applying Corollary~\ref{cor: |Aut| <= 75g^4}, we obtain the second assertion.
\end{proof}

Also we need a bound for conic bundles with base of genus $0$.

\begin{lemma} \label{lemma: g=0}
Let $\FF_q$ be a finite field. Let $\varphi \colon S \to C$ be a two-dimensional geometrically irreducible conic bundle over $\FF_q$, where $C$ is a smooth curve of genus $0$. Let $\Bir(S, \varphi)$ be the group that consists of all birational automorphisms of $S$ mapping the fibers of $\varphi$ again to the fibers of $\varphi$. Then~$\Bir(S, \varphi)$ contains a normal abelian subgroup of index at most~$\relpenalty=10000 J_{\CB}^0 = \binoppenalty = 10000 q(q^2 - 1)\max\{q(q^2-1), 60\}.$
\end{lemma}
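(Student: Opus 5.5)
This is a direct specialization of Theorem~\ref{theo: main1} to the case where the base has genus $0$. The plan is to compute the two ingredients of $J_{\CB}(C)$: the gonality $k=\gon(C)$ and the order $|\Aut(C)|$.

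First we identify the base. Since $S$ is geometrically irreducible, so is $C$, and $C$ is a smooth projective curve of genus $0$ over $\FF_q$, that is, a smooth conic. By Chevalley's theorem, as used in the proof of Lemma~\ref{gonality}, $C$ has an $\FF_q$-point. Hence $C \simeq \PP^1_{\FF_q}$, and Lemma~\ref{gonality} gives $\gon(C)=1$. Substituting $k=1$ into Theorem~\ref{theo: main1} shows that $\Bir(S,\varphi)$ contains a normal abelian subgroup of index at most
\[
|\Aut(C)|\cdot\max\{q(q^{2}-1),60\}.
\]

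It remains to bound $|\Aut(C)|$. Here
\[
\Aut(\PP^1_{\FF_q}) \simeq \PGL_2(\FF_q),
\]
since automorphisms of $\PP^1$ over a field are the fractional linear transformations with coefficients in that field. Its order is
\[
|\GL_2(\FF_q)|/(q-1) = (q^2-1)(q^2-q)/(q-1) = q(q^2-1).
\]
Multiplying gives exactly $J_{\CB}^0 = q(q^2-1)\max\{q(q^2-1),60\}$.

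There is no real obstacle; the only point needing care is the exact sequence in the proof of Theorem~\ref{theo: main1}. One must check that the image of $\Bir(S,\varphi)\to\Aut(C)$ lands in the $\FF_q$-automorphisms of $C$, so that the finite group $\PGL_2(\FF_q)$ is the correct bound rather than $\PGL_2(\overline{\FF}_q)$. This holds because $\Bir(S)$ consists of $\FF_q$-birational maps, so the induced automorphisms of the base are defined over $\FF_q$.

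Unlike Proposition~\ref{prop: g>=1}, we do not claim $\Bir(S)=\Bir(S,\varphi)$ here. The statement concerns only $\Bir(S,\varphi)$, which is why the lemma is formulated that way.
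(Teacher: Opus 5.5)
Your proposal is correct and takes essentially the same approach as the paper. The paper likewise uses Chevalley's theorem to identify $C \simeq \PP^1_{\FF_q}$, so that $\gon(C)=1$ and $|\Aut(C)| = |\PGL_2(\FF_q)| = q(q^2-1)$, and then substitutes these into Theorem~\ref{theo: main1}.
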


\begin{proof}
Curve $C$ is a conic. Therefore, $C$ has a $\FF_q$-point by Chevalley theorem, which means that $C \simeq \PP^1_{\FF_q}$ and $$|\Aut(C)| = |\Aut(\PP_{\FF_q}^1)| = |\PGL_2(\FF_q)| = q(q^2 - 1).$$ Applying Theorem \ref{theo: main1} we obtain the result.
\end{proof}

Finally, let us recall results on del Pezzo surfaces from~\cite{main} and on surfaces of non-negative Kodaira dimension from~\cite{surfaces}.

\begin{propos}[{\cite[Proposition 5.6]{main}}]\label{prop: del Pezzo}
    Let $S$ be a del Pezzo surface over $\FF_q$. Then~$\Aut(S)$ contains a normal abelian subgroup of index at most
    \[
    J_{\dP} = 
    \begin{cases}
        q^3(q^2 - 1)(q^3 - 1), \quad \text{if } q \text{ is odd};\\
        \max\{q^3(q^2 - 1)(q^3 - 1), |W(E_8)|\} \quad \text{if } q \text{ is even}.\\
    \end{cases}
    \]
\end{propos}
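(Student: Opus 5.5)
The statement is Proposition~\ref{prop: del Pezzo}, quoted from \cite[Proposition 5.6]{main}. Here is how I would reprove it directly.

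\textbf{Reduction to the kernel of the action on $\mathrm{Pic}$.} Let $G\subset\Aut(S)$ be finite and let $d=K_S^2$. Consider the action of $\Aut(S)$ on $\mathrm{Pic}(S_{\overline{\FF}_q})$ preserving $K_S$ and the intersection form. Its image lies in the Weyl group $W(E_{9-d})$, with the obvious modifications for $d\geqslant 7$ and for quadrics. The kernel $K$ is the subgroup of automorphisms acting trivially on the geometric Picard group, so it is enough to find a normal abelian subgroup of $K$ of controlled index. If $d\leqslant 5$, an automorphism acting trivially on $\mathrm{Pic}$ fixes every $(-1)$-curve. Blowing these down equivariantly over $\overline{\FF}_q$ shows that $K$ embeds into the stabilizer in $\PGL_3$ of at least $4$ points in general position, which is trivial. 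The exceptions are the generically non-reduced stabilizers for $d=1,2$, where the Geiser and Bertini involutions act by $-1$ and so are not in $K$. Hence for $d\leqslant 5$ the group $\Aut(S)$ embeds into $W(E_{9-d})$. Since $|W(E_8)|$ is the largest of these orders, this covers the $|W(E_8)|$ term.

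\textbf{The case of large degree.} Here I use the rationality of the field rather than Weyl groups. For $d\geqslant 6$, $K$ is a subgroup of a torus or of a product of forms of $\PGL_2$ or $\PGL_3$ defined over $\FF_q$. The $\FF_q$-points of such a group are finite and explicitly bounded: a torus contributes an abelian normal subgroup, and $\PGL_3(\FF_q)$ has order $q^3(q^2-1)(q^3-1)$. Concretely, I would treat $d=9$ (a Severi--Brauer surface; over $\FF_q$ it is $\PP^2$ by Wedderburn, so $\Aut(S)=\PGL_3(\FF_q)$ and $J\geqslant |\PGL_3(\FF_q)|$ suffices trivially), then $d=8$ (quadrics and $\FF_1$, whose automorphism groups are subgroups of $(\PGL_2(\FF_{q^2}))\rtimes \ZZ/2$ or $\PGL_2(\FF_q)^2\rtimes\ZZ/2$, of order at most $q^3(q^2-1)(q^3-1)$ after checking inequalities), then $d=7$, and finally $d=6$ (torus extended by a subgroup of $D_{12}$; the torus of $\FF_q$-points is a normal abelian subgroup of index at most $12$). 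For $d\leqslant5$ the bound is $|W(E_{9-d})|\leqslant |W(E_8)|$. The point of the two cases in $q$ is that for odd $q$ one must show $|W(E_{9-d})|$ can be replaced by something smaller. I would do this by noting that in odd characteristic $\Aut(S)$ for $d=1,2$ is a central extension by the Bertini or Geiser involution of a group acting faithfully on a lower-dimensional object, namely the quadric cone or the plane quartic. Its order can then be bounded by $\PGL_3(\FF_q)$-type groups via Lemma~\ref{char}-style arguments.

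\textbf{Main obstacle.} The hardest step is this odd-$q$ improvement in low degree: showing $|\Aut(S)|\leqslant q^3(q^2-1)(q^3-1)$, or that $\Aut(S)$ has a suitable abelian normal subgroup, without invoking $W(E_8)$. I would first try embedding $\Aut(S)$ for $d=2$ into $\Aut$ of a plane quartic times $\ZZ/2$, and for $d=1$ into automorphisms of a weighted projective hypersurface fixing the base point of $|-K_S|$. I would then bound the resulting finite subgroups of $\PGL_3(\FF_q)$ and $\GL_2(\FF_q)$ by their full orders.
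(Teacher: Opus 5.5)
The paper gives no proof of this statement; it quotes \cite[Proposition 5.6]{main}. Your proposal therefore has to stand on its own. The parts you actually carry out are correct: $\Aut(S)\hookrightarrow W(E_{9-d})$ for $d\leqslant 5$, and the case-by-case bounds for $d\geqslant 6$. For $d=6$, the group $T(\FF_q)$, which is the kernel of the action on the hexagon, is normal abelian of index at most $12$. This proves the bound for even $q$. It also proves the bound for odd $q\geqslant 13$, since then $|\PGL_3(\FF_q)|>|W(E_8)|$.

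\textbf{The gap is the odd case $q\in\{3,5,7,9,11\}$.} This is the whole content beyond the Weyl group bound, and you leave it as a plan. There are three problems.
\begin{enumerate}
\item \emph{Cubic surfaces over $\FF_3$ are missing.} Your plan only improves $d=1,2$, but $|W(E_6)|=51840>5616=|\PGL_3(\FF_3)|$. The group $W(E_6)^+\cong\mathrm{PSp}_4(\FF_3)$ is simple and has no proper subgroup of index $<27$. Hence every subgroup of $W(E_6)$ of order $>5616$ contains $W(E_6)^+$. You therefore need an extra input excluding $W(E_6)^+\subset\Aut(S)$ for a cubic surface in characteristic $3$. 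One option is the classification in \cite{DD}. Another: $\Aut(S)$ acts faithfully and linearly on the $4$-dimensional space $H^0(S,-K_S)$, while $\mathrm{PSp}_4(\FF_3)$ has no nontrivial $4$-dimensional linear representation in characteristic $3$.
\item \emph{The $d=1$ embedding into $\GL_2(\FF_q)$ is not injective.} This holds whether you go through $T_OS$ at the base point $O$ of $|-K_S|$ or through the action on $x,y$. Theorem~\ref{theo: action with a fixed point} needs $p\nmid|G|$. For instance, over $\FF_3$ the substitution $z\mapsto z+x^2$ is an automorphism of order $3$ of the del Pezzo surface $w^2=z^3-x^4z+xy^5+y^6$ in $\PP(1,1,2,3)$, and it acts trivially both on $x,y$ and on $T_OS$. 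Use instead the action on the pencil $|-K_S|\cong\PP^1_{\FF_q}$. Its kernel preserves every member and $O$. Hence it embeds into the automorphism group of the generic fibre of the blow-up of $O$ fixing the origin, which has order at most $24$. This gives $|\Aut(S)|\leqslant 24q(q^2-1)\leqslant q^3(q^2-1)(q^3-1)$.
\item \emph{For $d=2$, bounding $|\Aut(S)|$ is not enough.} That only gives $2|\PGL_3(\FF_q)|$. Instead take the central subgroup generated by the Geiser involution. Its quotient embeds into $\PGL_3(\FF_q)$ via the anticanonical double cover, so its index is at most $|\PGL_3(\FF_q)|$.
\end{enumerate}
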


\begin{theorem}[{\cite[Theorem 1.7(iii)]{surfaces}}]\label{theo: kappa >= 0}
     Let $\KK$ be an algebraically closed field of characteristic $p > 0$, and
let $S$ be an irreducible algebraic surface of non-negative Kodaira dimension over $\KK$. Then the group $\Bir(S)$ is Jordan.
\end{theorem}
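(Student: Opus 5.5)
The plan is to reduce the statement to a Jordan-type property of the automorphism group of a single projective surface, and then split that group into a connected part and a discrete part.

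First, since resolution of singularities holds for surfaces in any characteristic, we may replace $S$ by a smooth projective model. Because $S$ has non-negative Kodaira dimension, it has a unique minimal model $S_{\min}$: every birational map between minimal surfaces that are not ruled is an isomorphism, and this holds in characteristic $p$ too. Hence $\Bir(S)=\Aut(S_{\min})$, and it suffices to prove that $\Aut(X)$ is Jordan for a smooth projective minimal surface $X$ with $\kappa(X)\geqslant 0$.

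Second, consider the automorphism group scheme of $X$. It is locally of finite type, so the reduced identity component $A=(\Aut^0_X)_{\mathrm{red}}$ is a connected algebraic group.
\begin{itemize}
\item I claim $A$ is an abelian variety. Indeed, $X$ is not uniruled, so $A$ contains no copy of $\mathbb{G}_a$ or $\mathbb{G}_m$: an effective action of either would sweep out $X$ by rational orbit closures. By Chevalley's structure theorem, a connected algebraic group with no nontrivial connected linear subgroup is an abelian variety.
\item Consequently every finite subgroup of $A(\KK)$ is abelian. Moreover, for a finite $G\subset\Aut(X)$ the subgroup $G\cap A$ is normal in $G$, because $A$ is normal in $\Aut(X)$.
\end{itemize}

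Third, we bound the image of $G$ in $\Aut(X)/A$. Let $\rho\colon \Aut(X)\to \GL(\mathrm{NS}(X)/\mathrm{tors})\cong \GL_n(\ZZ)$ be the action on the Néron--Severi lattice.
\begin{itemize}
\item By Minkowski's theorem, finite subgroups of $\GL_n(\ZZ)$ have order bounded by a constant $M(n)$ depending only on $n=\rho(X)$. So $|\rho(G)|\leqslant M(n)$.
\item The kernel of $\rho$ fixes the numerical class of a chosen ample divisor $H$, so $\ker\rho\subset \Aut_{[H]}(X)$. The stabilizer of a polarization is a group scheme of finite type (it is an open and closed subscheme of a Hilbert scheme), so its group of components is finite. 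Hence $\ker\rho/(\ker\rho\cap A)$ has order at most a constant $c$ depending only on $X$.
\end{itemize}
Combining these, $G\cap A$ is a normal abelian subgroup of $G$ of index at most $c\cdot M(n)$, which proves that $\Bir(S)$ is Jordan.

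The step I expect to be the main obstacle is the second one. In characteristic $p$ the automorphism group scheme can be non-reduced, for instance for some Enriques or quasi-hyperelliptic surfaces. One must therefore check carefully that passing to the reduced group loses nothing on $\KK$-points, and that the non-uniruledness argument really excludes linear algebraic subgroups; the latter uses that $\kappa\geqslant 0$ surfaces are not uniruled, which in characteristic $p$ requires some care with unirational surfaces. If the direct argument fails here, I would fall back on the Enriques--Bombieri--Mumford classification and treat each class of minimal surfaces separately.
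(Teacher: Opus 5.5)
The paper does not prove this statement; it quotes it from \cite[Theorem 1.7(iii)]{surfaces}. Most of your architecture is sound. You use $\Bir(S)=\Aut(S_{\min})$ by uniqueness of the minimal model, take $G\cap A$ as the normal abelian subgroup, and bound its index via Minkowski on $\mathrm{NS}(X)/\mathrm{tors}$ together with finiteness of the component group of the stabilizer of a numerical polarization. Your worry about a non-reduced $\Aut_X$ is harmless: $A$ and $\Aut^0_X$ have the same $\KK$-points, and only $\KK$-points are used afterwards.

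The genuine gap is the justification of Step 2. The premise that a surface with $\kappa\geqslant 0$ is not uniruled is not merely delicate in characteristic $p$; it is false. By Shioda, the Fermat quartic is a unirational K3 surface when $p\equiv 3\pmod 4$. Likewise, Fermat surfaces of degree $d\geqslant 5$ with $p^\nu\equiv -1\pmod d$ are unirational surfaces of general type. For such $X$, the fact that orbit closures of $\mathbb{G}_a$ or $\mathbb{G}_m$ are rational curves covering $X$ gives no contradiction.

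What you need is the stronger statement that a nontrivial action of $H\in\{\mathbb{G}_a,\mathbb{G}_m\}$ makes $X$ birationally \emph{ruled}. Ruledness is incompatible with $\kappa\geqslant 0$ in every characteristic, exactly as you use in Step 1. This is a result going back to Matsumura, and for surfaces it can be checked directly:
\begin{enumerate}
\item Take a Rosenlicht quotient $U\to Y$ onto a curve, with $K=\KK(Y)=\KK(X)^H$.
\item Connectedness of $H$ shows that $K$ is algebraically closed in $\KK(X)$, since $H$ must fix each root of the minimal polynomial of an algebraic element.
\item $K$ is a function field of one variable over the perfect field $\KK$, so $[K:K^p]=p$. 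Together with the previous step, this forces $\KK(X)/K$ to be separable.
\item Hence the generic orbit is a smooth curve over $K$ that is geometrically a single orbit.
\item Since $H$ is commutative, all geometric points of this orbit have the same stabilizer, which therefore descends to $K$.
\item So the generic orbit is a torsor under $H_K/\mathrm{Stab}\cong H_K$, which is trivial by additive or multiplicative Hilbert~90.
\item Thus $\KK(X)=K(t)$ and $X$ is ruled.
\end{enumerate}
With this replacement, or by going through the Bombieri--Mumford classification as you suggest, the rest of your argument goes through.
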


Now we are able to prove Theorem~\ref{theo: main}.

\begin{proof}[{of Theorem~\ref{theo: main}}]
If $S$ has non-negative Kodaira dimension, then $\Bir(S_{\overline{\FF}_q})$ is Jordan by Theorem~\ref{theo: kappa >= 0} and, therefore, $\Bir(S)$ is Jordan. Otherwise, put $$J = \max\{J^0_{\CB},J^1_{\CB},J^{\geqslant 2}_{\CB}, J_{\dP}\}$$
and consider a finite subgroup~$G \subset \Bir(S)$. Let~$\tilde{S}$ be a surface birationally isomorphic to $S$, where $G$ acts regularly (see~\mbox{\cite[Lemma 3.5]{D-I}}), and let $S'$ be a result of~$G$-Minimal Model Program applied to~$\tilde{S}$. Then $S'$ is either a del Pezzo surface, or has a structure of a conic bundle~$\varphi\colon S' \rightarrow C$ with~$G \subset \Aut(S',\varphi)$. In the former case $G$ has a normal abelian subgroup of index at most $J$ by Proposition~\ref{prop: del Pezzo}. In the latter case $G$ has a normal abelian subgroup of index at most~$J$ by Proposition~\ref{prop: g>=1} and Lemma~\ref{lemma: g=0}. This proves the Jordan property for $\Bir(S)$.
\end{proof}

\section{Higher dimensions}\label{sect: higher dimensions}

In this section, we show that the result of Theorem~\ref{theo: main} does not hold in higher dimensions proving Theorem~\ref{theo: dim3}. We need the following auxiliary lemma.

\begin{lemma}\label{lemma: for PGL_3(F_q(t))}
    The group $\PGL_3(\FF_q(t))$ is not Jordan.
\end{lemma}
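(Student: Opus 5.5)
The plan is to exhibit inside $\PGL_3(\FF_q(t))$ a family of finite $p$-groups of Heisenberg type in which \emph{every} abelian subgroup has index tending to infinity. For each $n \geqslant 1$, let $V_n \subset \FF_q[t]$ be the $\FF_p$-span of $1,t,\dots,t^{n-1}$ and let $W_n$ be the $\FF_p$-span of $1,t,\dots,t^{2n-2}$, so that $V_n\cdot V_n \subset W_n$. Let $U_n$ be the set of upper unitriangular matrices
\[
u(a,b,c)=\begin{pmatrix}1&a&c\\0&1&b\\0&0&1\end{pmatrix},\qquad a,b\in V_n,\ c\in W_n .
\]
Since $u(a,b,c)u(a',b',c')=u(a+a',b+b',c+c'+ab')$ and $ab'\in W_n$, the set $U_n$ is a finite subgroup of $\GL_3(\FF_q(t))$ of order $p^{4n-1}$. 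It maps injectively to $\PGL_3(\FF_q(t))$, because the only scalar unitriangular matrix is the identity.

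Next I compute commutators: $[u(a,b,c),u(a',b',c')]=u(0,0,ab'-a'b)$. Hence two elements commute if and only if $ab'=a'b$ in $\FF_q[t]$. Let $A\subset U_n$ be an abelian subgroup, and let $L\subset V_n\oplus V_n$ be its image under $u(a,b,c)\mapsto (a,b)$. Then $L$ is an $\FF_p$-subspace on which $ab'=a'b$ holds for all pairs of its elements, and $|A|\leqslant |L|\cdot|W_n|$.

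The key step is to bound $\dim_{\FF_p} L\leqslant n$. If $L$ contains some $(a_0,b_0)$ with $a_0\neq 0$, then every $(a,b)\in L$ satisfies $b=\lambda a$ with $\lambda=b_0/a_0\in\FF_q(t)$ fixed. Hence the projection $L\to V_n$, $(a,b)\mapsto a$, is injective, and $\dim L\leqslant n$. Otherwise all elements of $L$ have $a=0$, so $L\subset 0\oplus V_n$ and again $\dim L\leqslant n$. Consequently $|A|\leqslant p^{n}\cdot p^{2n-1}$, so the index of $A$ in $U_n$ is at least $p^{4n-1}/p^{3n-1}=p^n$.

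In particular, no abelian subgroup of $U_n$, normal or not, has index less than $p^n$. Since $n$ is arbitrary, no constant $J$ can serve as a Jordan constant, and so $\PGL_3(\FF_q(t))$ is not Jordan. The only step requiring care is the isotropy argument bounding $\dim L$. It works precisely because $\FF_q(t)$ is a field: the condition $ab'=a'b$ forces all pairs in $L$ to lie on a single line of slope $\lambda\in\FF_q(t)\cup\{\infty\}$. I expect no real obstacle beyond checking the group law and the commutator formula.
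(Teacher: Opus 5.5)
Your proof is correct and follows essentially the same route as the paper: it uses a Heisenberg-type group of upper unitriangular matrices with bounded-degree polynomial entries in $\FF_q(t)$, in which every abelian subgroup has index tending to infinity. The differences are cosmetic. You take $\FF_p$-spans with degree bounds $n-1$ and $2n-2$, and you show the projection of an abelian subgroup to the $(a,b)$-coordinates lies on a single line. The paper instead takes $\FF_q$-polynomials of degree at most $\lfloor n/2\rfloor$ and $n$, and bounds the centralizer of a non-central element; that bound is the same observation, phrased via the condition $a_g z = c_g x$.
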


\begin{proof}
    
For a fixed $n \in \mathbb{N}$ put $m = \lfloor \frac{n}{2} \rfloor$ and consider a subgroup $G_n \subset \PGL_3(\FF_q(t))$ of matrices of the form
$$
\begin{pmatrix}
1& a& b\\
0& 1& c\\
0& 0& 1\end{pmatrix},$$
for $b \in \FF_q[t]_{\leqslant n}$ and $a,\, c \in \FF_q[t]_{\leqslant m}$, where $\FF_q[t]_{\leqslant n}$ and $\FF_q[t]_{\leqslant m}$ are sets of polynomials of degree less or equal to $n$ and $m$, respectively. In particular, we have a formula for the order:
\[
|G_n| = |\FF_q[t]_{\leqslant n}|\cdot|\FF_q[t]_{\leqslant m}|^2 = q^{n+1}\cdot q^{2m+2}.
\]
The multiplication works as follows
$$
\begin{pmatrix}
1& a_1& b_1\\
0& 1& c_1\\
0& 0& 1\end{pmatrix}
\begin{pmatrix}
1& a_2& b_2\\
0& 1& c_2\\
0& 0& 1\end{pmatrix} = 
\begin{pmatrix}
1& a_1+a_2& b_1 + b_2 + a_1c_2\\
0& 1& c_1 + c_2\\
0& 0& 1\end{pmatrix}.$$
The inverse of an element is given by
$$\begin{pmatrix}
1& a& b\\
0& 1& c\\
0& 0& 1\end{pmatrix}^{-1} = 
\begin{pmatrix}
1& -a& -b + ac\\
0& 1& -c\\
0& 0& 1\end{pmatrix},$$
so it is actually a subgroup.
Also note that the center $Z(G_n)$ is given by $a=c=0$, in particular, we have 
\[
|Z(G_n)| = |\FF_q[t]_{\leqslant n}| = q^{n+1}.
\]

Let us prove that there are no abelian subgroups of $G_n$ of index smaller than $q^{m+1}$. Let~$A \subset G_n$ be an abelian subgroup. If $A \subset Z(G_n)$, then 
\[
[G_n:A] \geqslant [G_n:Z(G_n)] = |\FF_q[t]_{\leqslant m}|^2 = q^{2m+2} > q^{m+1}.
\]
If $A \not\subset Z(G_n)$, then consider an element 
\[g =
\begin{pmatrix}
1& a_g& b_g\\
0& 1& c_g\\
0& 0& 1\end{pmatrix} \in A \backslash Z(G_n)
\]
and bound the order of its centralizer $C_{G_n}(g)$. The centralizer $C_{G_n}(g)$ consists of matrices 
\[
\begin{pmatrix}
1& x& y\\
0& 1& z\\
0& 0& 1\end{pmatrix}
\]
with property $a_gz = c_gx$. Since $g \not\in Z(G_n)$, then $(a_g,c_g) \neq (0,0)$. If $a_g = 0$ or $c_g = 0$, then we have $x=0$ or $z=0$, respectively, and 
\[
|C_{G_n}(g)| \leqslant |\FF_q[t]_{\leqslant n}|\cdot|\FF_q[t]_{\leqslant m}| =q^{n+1}\cdot q^{m+1}
\]
in both cases. If $a_g \neq 0$ and $c_g \neq 0$, then put $\tilde{a}_g = \frac{a_g}{\gcd(a_g,c_g)}$ and  $\tilde{c}_g = \frac{c_g}{\gcd(a_g,c_g)}$. Therefore, condition $a_gz = c_gx$ is equivalent to condition $\tilde{a}_gz = \tilde{c}_gx$ and we get $x = \tilde{a}_gw$, $z = \tilde{c}_gw$ for some $w \in \FF_q[t]_{\leqslant m}$. Hence, the number of possible pairs $(x,z)$ is bounded by $|\FF_q[t]_{\leqslant m}|$ and
\[
|C_{G_n}(g)| \leqslant |\FF_q[t]_{\leqslant n}|\cdot|\FF_q[t]_{\leqslant m}| = q^{n+1}\cdot q^{m+1}
\]
in this case too.

Since $A \subset C_{G_n}(g)$, we obtain inequalities 
\[
[G_n:A] \geqslant [G_n:C_{G_n}(g)] = \frac{|G_n|}{|C_{G_n}(g)|} \geqslant \frac{q^{n+1} \cdot q^{2m+2}}{q^{n+1}\cdot q^{m+1}} = q^{m+1}.
\]

As $n$ tends to $\infty$ the value $q^{m+1}$ also tends to $\infty$, and, therefore, group $\PGL_3(\FF_q(t))$ is not Jordan.
\end{proof}

\begin{proof}[{of Theorem~\ref{theo: dim3}}]
Since we have a birational isomorphism
\[
\PP^n_{\FF_q} \dashrightarrow \PP^2_{\FF_q} \times \PP^1_{\FF_q} \times \PP^{n-3}_{\FF_q},
\]
there exists an embedding of groups $$\Bir(\PP_{\FF_q}^2 \times \PP_{\FF_q}^1)  \hookrightarrow \Bir(\PP^n_{\FF_q}).$$ Also, $\Bir(\PP_{\FF_q}^2 \times \PP_{\FF_q}^1)$ contains~$\PGL_3(\FF_q(t))$ as an automorphism group of the generic fiber of the projection to the second factor. The latter group is not Jordan by Lemma~\ref{lemma: for PGL_3(F_q(t))}. Therefore, $\Bir(\PP_{\FF_q}^2 \times \PP_{\FF_q}^1)$ and $\Bir(\PP^n_{\FF_q})$ are both not Jordan.
\end{proof}

\section{Birational permutations}\label{sect: birational permutations}

In this section, we establish boundedness of finite subgroups in groups of birational permutations of varieties over finite fields proving Theorem~\ref{theo: BBir}. Let us recall the definitions. 

\begin{definition}\label{def: birational permutations}
    Let $X$ be a variety over a field $\KK$. For $f \in \Bir(X)$ denote by $\Ind(f)$ the locus of indeterminacy. Then the group of birational permutations $\BBir(X)$ is the following subgroup
    \[
    \BBir(X) = \{f \in \Bir(X) \mid \Ind(f)(\KK) = \Ind(f^{-1})(\KK) = \emptyset \}.
    \]
\end{definition}
   
\begin{definition}\label{def: p'-boundedness}
     For a prime number $p$ we say that finite group $G$ is a~$p'$-group (or a~$p'$-subgroup if $G$ is a subgroup of some other group) if~$\gcd(|G|,p) = 1$. We say that group $\Gamma$ has bounded finite $p'$-subgroups if there exists a constant $C$ such that for any finite~$p'$-subgroup~$\relpenalty=10000 G \subset \Gamma$ one has $|G| \leqslant C$.
\end{definition}

The following definition is due to Hu.

\begin{definition}[{\cite[Definition 1.6]{Hu20}}]\label{def: p-Jordan}
    Let $p$ be a prime number and $\Gamma$ be an infinite group. We say that $\Gamma$ is $p$-Jordan if there exist constants $J$ and $e$ such that any finite subgroup~$\relpenalty=10000 G \subset \Gamma$ contains a normal abelian subgroup $A$ with $[G:A] \leqslant J |G_{(p)}|^e$, where~$G_{(p)}\subset G$ is a $p$-Sylow subgroup.
\end{definition}

Let us prove a simple group-theoretic lemma.

\begin{lemma}\label{lemma: p'-boundedness => p-Jordan}
    Let $p$ be a prime number and $\Gamma$ be an infinite group. If $\Gamma$ has bounded~\mbox{$p'$-subgroups}, then $\Gamma$ is $p$-Jordan.
\end{lemma}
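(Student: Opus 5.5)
The plan is to take the trivial abelian subgroup $A=\{1\}$, which is always normal, and show that its index $|G|$ is already bounded by $J|G_{(p)}|$. So we aim to prove that for every finite subgroup $G\subset\Gamma$ one has $[G:G_{(p)}]\leqslant C$, where $C$ is the constant from the boundedness of finite $p'$-subgroups. This gives $[G:\{1\}]=|G|\leqslant C\,|G_{(p)}|$, so $\Gamma$ is $p$-Jordan with $J=C$ and $e=1$.

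The key step is to produce, inside an arbitrary finite $G$, a $p'$-subgroup of order comparable to $[G:G_{(p)}]$. A Hall $p'$-subgroup is not available in general, since $G$ need not be solvable, so I would instead use the Sylow $\ell$-subgroups for the primes $\ell\neq p$. Write $|G|=p^a\prod_{\ell\neq p}\ell^{a_\ell}$. For each prime $\ell\neq p$ dividing $|G|$, a Sylow $\ell$-subgroup $G_{(\ell)}$ is a finite $p'$-subgroup of $\Gamma$, so $\ell^{a_\ell}\leqslant C$.

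Two consequences follow. First, every prime $\ell\neq p$ dividing $|G|$ satisfies $\ell\leqslant C$. Second, each exponent satisfies $a_\ell\leqslant\log_2 C$. Therefore
\[
[G:G_{(p)}]=\prod_{\ell\neq p}\ell^{a_\ell}\leqslant \prod_{\ell\leqslant C,\ \ell\ \text{prime}} C=:C',
\]
a constant depending only on $C$. Then $|G|\leqslant C'|G_{(p)}|$, and we take $A=\{1\}$, $J=C'$ and $e=1$ in Definition~\ref{def: p-Jordan}.

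The main obstacle is precisely the absence of Hall subgroups in non-solvable groups. Bounding the $p'$-part of $|G|$ prime by prime through Sylow subgroups avoids it, at the cost of a worse but still finite constant $C'\leqslant C^{\pi(C)}$.
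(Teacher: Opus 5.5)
Your proposal is correct and is essentially the paper's proof. Both arguments bound each $\ell$-part of $|G|$ for $\ell\neq p$ by $C$ via Sylow $\ell$-subgroups, conclude that only finitely many primes can occur, and take the trivial normal abelian subgroup with $e=1$. The only difference is the constant: yours is $C^{\pi(C)}$, slightly sharper than the paper's $C^C$.
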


\begin{proof}
    By definition of boundedness of finite $p'$-subgroups, there exists a constant $C$ such that for any finite $p'$-subgroup $G \subset \Gamma$ one has $|G| \leqslant C$. Consider an arbitrary finite subgroup $G \subset \Gamma$. Denote by~$\mathfrak{L}$ the set of all primes different from $p$ and decompose the order of $G$ into a product of primes 
    \begin{equation}\label{eq: |G| = prod (q)}
    |G| = \left(\prod_{\ell \in \mathfrak{L}} \ell^{\alpha_{\ell}} \right) \cdot |G_{(p)}|,
    \end{equation}
    where $G_{(p)} \subset G$ is a $p$-Sylow subgroup. For each $\ell \in \mathfrak{L}$ such that $\alpha_{\ell} \neq 0$ consider a~\mbox{$\ell$-Sylow} subgroup $G_{(\ell)} \subset G$. Then $G_{(\ell)} \subset \Gamma$ is a finite $p'$-subgroup and $\ell^{\alpha_{\ell}} = |G_{(\ell)}| \leqslant C$. Therefore, the number of primes other than $p$ appearing in formula~\eqref{eq: |G| = prod (q)} is less than $C$ and we have 
    \[
    |G| =  \left(\prod_{\ell \in \mathfrak{L}} \ell^{\alpha_{\ell}} \right) \cdot |G_{(p)}| \leqslant C^C \cdot |G_{(p)}|.
    \]
    Hence, $\Gamma$ is $p$-Jordan according to Definition~\ref{def: p-Jordan}, take $J = C^C$, $e = 1$, and the trivial subgroup for~$A$.
\end{proof}

Before proving Theorem~\ref{theo: BBir}, let us recall the well known theorem about the action of a finite group on a tangent space to a fixed point.

\begin{theorem}[{see for example \cite[Theorem 3.7]{surfaces}}]\label{theo: action with a fixed point} Let $X$ be an irreducible algebraic variety over a field $\KK$ of characteristic $p$. Let $G$ be a finite group acting on $X$ with a fixed~$\KK$-point~$P$. Suppose
that $|G|$ is not divisible by $p$. Then the natural representation
\[
d\colon G \rightarrow \GL(T_P(X))
\]
is an embedding.
    
\end{theorem}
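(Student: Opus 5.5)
The plan is to show that the kernel of $d$ is trivial by letting it act on the local ring at $P$ and passing to the completion. Suppose $g \in G$ lies in the kernel of $d$. Replacing $G$ by the cyclic group $\langle g \rangle$, we may assume $G = \langle g\rangle$, of order $n$ coprime to $p$; we must show $g = 1$.

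Let $\mathcal O = \mathcal O_{X,P}$ be the local ring with maximal ideal $\mathfrak m$. Since $P$ is a $\KK$-point, the residue field is $\KK$. Since $g$ fixes $P$, pullback $g^*$ defines a $\KK$-algebra automorphism of $\mathcal O$ preserving $\mathfrak m$. Hence $g^*$ acts on every quotient $\mathcal O/\mathfrak m^{k+1}$ and on every graded piece $\mathfrak m^k/\mathfrak m^{k+1}$. The cotangent space $\mathfrak m/\mathfrak m^2$ is the $\KK$-dual of $T_P(X)$, so the hypothesis $d(g)=1$ means $g^*$ acts trivially on $\mathfrak m/\mathfrak m^2$. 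Multiplication gives a $G$-equivariant surjection $\mathrm{Sym}^k(\mathfrak m/\mathfrak m^2) \to \mathfrak m^k/\mathfrak m^{k+1}$. Therefore $g^*$ acts trivially on each graded piece $\mathfrak m^k/\mathfrak m^{k+1}$, and also on $\mathcal O/\mathfrak m = \KK$.

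Next we lift triviality from the graded pieces to the finite-dimensional $\KK$-vector space $V_k = \mathcal O/\mathfrak m^{k+1}$, and this is the step where the hypothesis $p \nmid |G|$ is used. The space $V_k$ has a $g^*$-stable filtration $V_k \supset \mathfrak m V_k \supset \dots \supset \mathfrak m^{k}V_k \supset 0$ with trivial action on the subquotients, so $(g^*-1)^{k+1} = 0$ on $V_k$. On the other hand $(g^*)^n = 1$, and the polynomial $x^n - 1$ is separable over $\KK$ because $p \nmid n$. The minimal polynomial of $g^*$ divides both $(x-1)^{k+1}$ and $x^n-1$, hence divides $x-1$, so $g^*$ is the identity on $\mathcal O/\mathfrak m^{k+1}$ for every $k$. (Equivalently, by Maschke's theorem we could choose a $G$-stable complement at each step.)

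Now for any $f \in \mathcal O$ we have $g^*f - f \in \bigcap_k \mathfrak m^{k+1}$, and this intersection is $0$ by the Krull intersection theorem, since $\mathcal O$ is a Noetherian local ring. Thus $g^*$ is the identity on $\mathcal O$. We take $X$ irreducible and reduced (a variety), so $\mathcal O$ is a domain with fraction field $\KK(X)$, and $g^*$ is the identity on $\KK(X)$. An automorphism of $X$ acting trivially on the function field coincides with the identity on a dense open subset, hence everywhere, since $X$ is separated. Therefore $g = 1$ and $d$ is injective.

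The only genuinely delicate point is the unipotent-plus-finite-order argument in the third paragraph; the rest is formal.
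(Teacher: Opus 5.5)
Your proof is correct. The paper gives no argument of its own here and simply cites \cite[Theorem 3.7]{surfaces}, and what you wrote is the standard proof of this fact: act on $\mathcal O_{X,P}/\mathfrak m^{k+1}$, use $p\nmid |G|$ to pass from triviality on the graded pieces $\mathfrak m^k/\mathfrak m^{k+1}$ to triviality on the whole quotient (your separable-minimal-polynomial argument and a Maschke splitting are interchangeable here), and finish with the Krull intersection theorem.

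Since you only use the action on $\mathcal O_{X,P}$ and on $\KK(X)$, your argument also applies verbatim where the paper actually invokes the theorem: the group $G_P\subset\BBir(X)$ of birational maps which, together with their inverses, are regular at the fixed point $P$.
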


\begin{proof}[{of Theorem~\ref{theo: BBir}}]
    Denote by $N \geqslant 1$ the number of $\FF_q$-points on $X$ and denote one of these points by $P$. The group $\BBir(X)$ acts on $\FF_q$-points by permutations. Consider the stabilizer subgroup $\BBir_P(X) \subset \BBir(X)$ of $P$. Clearly, we have $$[\BBir(X):\BBir_P(X)] \leqslant N.$$ 

    Consider a finite $p'$-subgroup $G \subset \BBir(X)$. Consider the intersection $$\relpenalty=10000 \binoppenalty=10000 G_P = G \cap \BBir_P(X).$$ Then we have $[G:G_P] \leqslant N$ and $G_P$ is a finite $p'$-subgroup acting on $X$ with a fixed point~$P$. By Theorem~\ref{theo: action with a fixed point} we have $G_P \subset \GL(T_P(X)) \simeq \GL_d(\FF_q)$, where $d$ is the dimension of the tangent space at $P$ and 
    \[
    |G| = [G:G_P]\cdot|G_P| \leqslant N \cdot |\GL_d(\FF_q)|.
    \]
    This proves that $\BBir(X)$ has bounded finite $p'$-subgroups. Also, therefore, $\BBir(X)$ is~$p$-Jordan by Lemma~\ref{lemma: p'-boundedness => p-Jordan}. 
\end{proof}

\begin{rem}\label{rem: there is no boundedness of subgroups}
    In Theorem~\ref{theo: BBir}, the $p'$-boundedness cannot be strengthened to boundedness. For example, one can show that even the finite subgroups of $\BBir(\PP^1_{\FF_q} \times \PP^1_{\FF_q})$ are not bounded.
\end{rem}


\begin{thebibliography}{99}

%\bibitem{surfaces}
%\textbf{Y. Chen, C. Shramov.} Automorphisms of surfaces over fields of positive characteristic. https://arxiv.org/abs/2106.15906, (2021).

\bibitem{ALNZ22}
\textbf{S. Asgarli, K.-W. Lai, M. Nakahara, S. Zimmermann.}
Bijective Cremona transformations of the plane.
Sel. Math. New Ser. \textbf{28} (2022), article~53.

\bibitem{Can09}
\textbf{S. Cantat.}
Birational permutations.
C. R. Math. Acad. Sci. Paris \textbf{347} (2009), no.~21--22, 1289--1294.

\bibitem{surfaces}
\textbf{Y. Chen, C. Shramov.}
Automorphisms of surfaces over fields of positive characteristic.
Geom. Topol. \textbf{28} (2024), no.~6, 2747--2791.

\bibitem{DD}
\textbf{I. Dolgachev, A. Duncan.} Automorphisms of cubic surfaces in positive characteristic. Izv.
Math. \textbf{83} (2019), no. 3, 424--500.

\bibitem{D-I} 
\textbf{I. Dolgachev, V. Iskovskikh.} Finite subgroups of the plane Cremona group. In Algebra, arithmetic, and geometry: in honor of Yu. I. Manin, vol. I. Progr. Math., \textbf{269} (2009), 443--548.

\bibitem{Hu20}
\textbf{F. Hu.}
Jordan property for algebraic groups and automorphism groups of projective varieties
in arbitrary characteristic.
Indiana Univ. Math. J. \textbf{69} (2020), no.~7, 2493--2504.

\bibitem{LP11}
\textbf{M. J. Larsen, R. Pink.}
Finite subgroups of algebraic groups.
J. Amer. Math. Soc. \textbf{24} (2011), no.~4, 1105--1158.

%\bibitem{Pro21}
%\textbf{Y. Prokhorov.} Equivariant minimal model program. Russian Math. Surveys 76, no. 3, 461вЂ“542 (2021).

%\bibitem{main}
%\textbf{Y. Prokhorov, C. Shramov.} Jordan property for Cremona group over a finite field. https://arxiv.org/abs/2111.13367, (2021).

\bibitem{main}
\textbf{Y. Prokhorov, C. Shramov.}
Jordan property for the Cremona group over a finite field.
Proc. Steklov Inst. Math. \textbf{320} (2023), 278--289.

\bibitem{PrShr higher dim} 
\textbf{Y. Prokhorov, C. Shramov.} Jordan property for groups of birational selfmaps. Compositio Mathematica, \textbf{150} (2014), no. 12, 2054--2072.

\bibitem{Serre} \textbf{J.-P. Serre.} Le groupe de Cremona et ses sous-groupes finis. S\'eminaire Bourbaki, Nov. 2008, 61\textsuperscript{\`eme} ann\'e, \textbf{1000} (2010), 75--100.

\bibitem{Sil}
\textbf{J. H. Silverman.} The arithmetic of elliptic curves. Second edition. Graduate Texts in Mathematics, 106. Springer, Dordrecht (2009).

%\bibitem{Sti-book}
%\textbf{H. Stichtenoth.}
%Algebraic Function Fields and Codes.
%Second edition. Grad. Texts in Math. \textbf{254}, Springer, Berlin, 2009.

\bibitem{Sti73}
\textbf{H. Stichtenoth.} \"Uber die Automorphismengruppe eines algebraischen Funktionenk\"orpers
von Primzahlcharakteristik. I. Eine Absch\"atzung der Ordnung der Automorphismengruppe,
Arch. Math. (Basel) \textbf{24} (1973), 527--544. 

\bibitem{Vikulova} 
\textbf{A. Vikulova.} Jordan constant for the Cremona group of rank two over a finite field. Math. Notes, \textbf{113} (2023), no. 4, 587–592.

\bibitem{Weil}
\textbf{A. Weil.}
Sur les courbes alg\'ebriques et les vari\'et\'es qui s'en d\'eduisent.
Actualit\'es Sci. Ind. \textbf{1041},
Publ. Inst. Math. Univ. Strasbourg \textbf{7},
Hermann \& Cie, Paris, 1948.

\bibitem{Yas}
\textbf{E. Yasinsky.}
The Jordan constant for Cremona group of rank 2. Bulletin of the Korean Mathematical Society, \textbf{54} (2017), no. 5, 1859--1871.

\bibitem{Zai}
\textbf{A. Zaitsev.} Jordan constants of Cremona group of rank 2 over fields of characteristic zero. International Mathematics Research Notices, \textbf{2025} (2025), no. 6.




\end{thebibliography}
\end{document}